\newtheorem{thm}{Theorem}[section]
\newtheorem{prop}[thm]{Proposition}
\newtheorem{lemma}[thm]{Lemma}
\newtheorem{cor}[thm]{Corollary}
\newtheorem{remark}[thm]{Remark}
\newtheorem{example}[thm]{Example}
\numberwithin{equation}{section}
\def\bM{\mathbb{M}}
\def\bP{\mathbb{P}}
\def\bN{\mathbb{N}}
\def\bR{\mathbb{R}}
\def\bC{\mathbb{C}}
\def\Tr{\mathrm{Tr}\,}
\def\Im{\mathrm{Im}\,}
\def\id{\mathrm{id}}
\def\ffi{\varphi}
\def\cI{\mathcal{I}}
\def\eps{\varepsilon}
\begin{document}
\baselineskip=16pt
\allowdisplaybreaks

\ \vskip 1cm 
\centerline{\LARGE Concavity of certain matrix trace and norm functions}
\bigskip
\bigskip
\centerline{\Large
Fumio Hiai\footnote{E-mail: hiai.fumio@gmail.com}}

\medskip
\begin{center}
$^1$\,Tohoku University (Emeritus), \\
Hakusan 3-8-16-303, Abiko 270-1154, Japan
\end{center}

\medskip
\begin{abstract}
We refine Epstein's method to prove joint concavity/convexity of matrix
trace functions of the extended Lieb type
$\Tr\bigl\{\Phi(A^p)^{1/2}\Psi(B^q)\Phi(A^p)^{1/2}\bigr\}^s$, where $\Phi$ and $\Psi$
are positive linear maps. By the same method combined with majorization technique,
similar properties are proved for symmetric (anti-) norm functions of the form
$\|\{\Phi(A^p)\,\sigma\,\Psi(B^q)\}^s\|$ involving an operator mean $\sigma$.
Carlen and Lieb's variational method is also used to improve the convexity property of
norm functions $\|\Phi(A^p)^s\|$.

\bigskip\noindent
{\it 2010 Mathematics Subject Classification:}
Primary 15A60, 47A30, 47A60

\bigskip\noindent
{\it Key words and phrases:}
Matrices, trace, symmetric norms, symmetric anti-norms, joint concavity, joint convexity,
operator mean
\end{abstract}

\section*{Introduction}

In the present paper we consider matrix functions of the following types:
\begin{itemize}
\item[(i)] $F(A,B)=\bigl\{\Phi(A^p)^{1/2}\Psi(B^q)\Phi(A^p)^{1/2}\bigr\}^s$,
\item[(ii)] $F(A,B)=\{\Phi(A^p)\,\sigma\,\Psi(B^q)\}^s$,
\item[(iii)] $F(A)=\Phi(A^p)^s$.
\end{itemize}
Here, the variables $A$ and $B$ are positive definite matrices, $\Phi$ and $\Psi$ are
(strictly) positive linear maps between matrix algebras, and $p,q,s$ are real parameters.
Furthermore, $\sigma$ in (ii) is an operator mean in the Kubo-Ando sense \cite{KA}. For
matrix functions $F$ in the above we are mostly interested in the range of
the parameters $p,q,s$ (or $p,s$) for which the function
$$
(A,B)\longmapsto\|F(A,B)\|\quad(\mbox{or}\ \ A\longmapsto\|F(A)\|)
$$
is convex for any symmetric norm $\|\cdot\|$ (and for every $\Phi,\Psi$), and also
for which the function $(A,B)\mapsto\|F(A,B)\|_!$ (or $A\mapsto\|F(A)\|_!$) is concave
for any symmetric anti-norm $\|\cdot\|_!$ (and for every $\Phi,\Psi$). Here, the notion of
symmetric anti-norms was recently introduced in \cite{BH1} while that of symmetric norms
is familiar in matrix analysis (see, e.g., \cite{Bh,Hi2}). A symmetric anti-norm is
a non-negative functional on the positive part of a matrix algebra that is positively
homogeneous, invariant under unitary conjugation and superadditive (opposite to
subadditivity of symmetric norms). Therefore, it is a concave functional, that is the
reason why we take a symmetric anti-norm for concavity assertions while a symmetric norm
is for convexity assertions. It is worth noting that the trace functional is a symmetric
norm and a symmetric anti-norm in common.

For instance, when $s=1$, $\Phi(A):=X^*AX$ and $\Psi=\id$, the function (i) under the
trace is
$$
(A,B)\longmapsto\Tr X^*A^pXB^q,
$$
whose joint concavity/convexity is famous as Lieb's concavity/convexity \cite{Li}. It is
well-known that an equivalent reformulation is matrix concavity/convexity of
$(A,B)\mapsto A^p\otimes B^q$ due to Ando \cite{An}. When $0<p\le1$, $s=1/p$ and
$\Phi(A):=X^*AX$, the function (iii) under the trace is
$$
A\longmapsto\Tr(X^*A^pX)^{1/p},
$$
whose concavity was first proved by Epstein \cite{Ep} by a powerful method
using theory of Pick functions (often called Epstein's method). The method was
applied in \cite{Hi1} to prove (joint) concavity of trace functions of types
(i)--(iii) under certain respective conditions on $p,q,s$. The Minkowski type trace
function (or the trace function for the matrix power means) $\Tr(A^p+B^p)^{1/p}$ was
discussed in \cite{CL1} (also \cite{AnHi,Be}), which is a special case of the function
(ii) under the trace where $s=1/p$, $\Phi=\Psi=\id$ and $\sigma$ is the arithmetic mean.
More recently in \cite{CL2}, Carlen and Lieb extensively developed concavity/convexity
properties of the trace functions of the forms $\Tr(X^*A^pX)^s$ and $\Tr(A^p+B^p)^s$.
The most remarkable in \cite{CL2} is the new method using a variational
expression for $\Tr(X^*A^pX)^s$.
Furthermore in \cite{JR}, Jen\v cov\'a and Ruskai obtained equality conditions for Lieb's
concavity/convexity as well as for some related inequalities. In this way, the functions
of the above types (i)--(iii) cover many of important cases appearing in the study of
concavity/convexity of various matrix trace functions so far.

The present paper is a continuation of \cite{Hi1}. Our strategy here is two-fold. We first
refine Epstein's method to extend some known concavity/convexity results for trace
functions as much as possible. After this is done we further extend the results with the
trace to those with symmetric (anti-) norms by using the majorization method. In Section 1
we treat the function (i) under the trace and prove its joint concavity/convexity under
suitable conditions on $p,q,s$ by using Epstein's method. In Section 2 we prove joint
concavity/convexity of the function (ii) with symmetric (anti-) norms under suitable
conditions on $p,q,s$. In Section 3, by specializing the function (ii) and
also applying the variational method of Carlen and Lieb \cite{CL2}, we obtain
concavity/covexity results for the function (iii) with symmetric (anti-) norms.
In Section 4 we examine necessary conditions on the parameters $p,q,s$ (or $p,s$) for
concavity/convexity of the relevant functions, and compare them with sufficient conditions
obtained in Sections 1--3. Here, Bekjan's idea in \cite{Be} (also used in \cite{CL2}) is
of particular use. Since it does not seem easy to extend the result of Section 1 to
functions with symmetric (anti-) norms, we consider, in Section 5, the function (i) with
the operator norm and the smallest singular value, which are particular cases of the
Ky Fan (anti-) norms. Note that convexity under all symmetric norms is reduced to that
under all Ky Fan norms and concavity under all symmetric anti-norms is to that under all
Ky Fan anti-norms.

\section{Trace functions of Lieb type}

We begin with fixing some common notations. For each $n\in\bN$ the $n\times n$ complex
matrix algebra is denoted by $\bM_n$. We write $\bM_n^+:=\{A\in\bM_n:A\ge0\}$, the
$n\times n$ positive semidefinite matrices, and $\bP_n:=\{A\in\bM_n:A>0\}$, the
$n\times n$ positive definite matrices. The usual trace on $\bM_n$ is denoted by $\Tr$.
A linear map $\Phi:\bM_n\to\bM_m$ is positive if $A\in\bM_n^+$ implies $\Phi(A)\in\bM_m^+$,
and it is strictly positive if $A\in\bP_n$ implies $\Phi(A)\in\bP_m$. Clearly, a positive
linear map $\Phi:\bM_n\to\bM_m$ is strictly positive if $\Phi(I_n)\in\bP_m$, where $I_n$
is the identity of $\bM_n$.

In this section we consider joint concavity and convexity of the trace function
\begin{equation}\label{F-1.1}
(A,B)\in\bP_n\times\bP_m\longmapsto
\Tr\bigl\{\Phi(A^p)^{1/2}\Psi(B^q)\Phi(A^p)^{1/2}\bigr\}^s,
\end{equation}
where $\Phi$ and $\Psi$ are (strictly) positive linear maps between matrix algebras. In
particular, when $s=1$, $\Phi(A):=X^*AX$ and $\Psi=\id$, the identity map, the above
function is
\begin{equation}\label{F-1.2}
(A,B)\in\bP_n\times\bP_n\longmapsto\Tr X^*A^pXB^q,
\end{equation}
for which Lieb's concavity (also convexity) is well-known \cite{Li} (also \cite{An}).

Throughout the section we assume that $(p,q)\ne(0,0)$ and $s\ne0$; otherwise, the function
\eqref{F-1.1} is constant. The next theorem extends \cite[Theorem 2.3]{Hi1}.

\begin{thm}\label{T-1.1}
Let $n,m,l\in\bN$. Let $\Phi:\bM_n\to\bM_l$ and $\Psi:\bM_m\to\bM_l$ be strictly positive
linear maps. 
\begin{itemize}
\item[\rm(1)] If either $0\le p,q\le1$ and $1/2\le s\le1/(p+q)$, or $-1\le p,q\le0$ and
$1/(p+q)\le s\le-1/2$, then the function \eqref{F-1.1} is jointly concave.
\item[\rm(2)] If either $0\le p,q\le1$ and $-1/(p+q)\le s\le-1/2$, or $-1\le p,q\le0$ and $1/2\le s\le-1/(p+q)$, then the function \eqref{F-1.1} is jointly convex.
\end{itemize}
\end{thm}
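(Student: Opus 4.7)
I would apply Epstein's method via Pick (Nevanlinna) functions, refined so as to reach the full parameter range as the introduction promises. I focus on case (1) under $0\le p,q\le 1$ and $1/2\le s\le 1/(p+q)$; the remaining cases should follow by parallel arguments combined with the inversion $A\mapsto A^{-1}$, $B\mapsto B^{-1}$ (which flips the signs of $p,q$) and the standard passage between concavity of $\Tr X^s$ and convexity of $\Tr X^{-s}$.

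Joint concavity of \eqref{F-1.1} is equivalent to concavity on every real line, i.e.\ of
\begin{equation*}
\ffi(t):=\Tr\bigl\{\Phi((A_0+tH_1)^p)^{1/2}\Psi((B_0+tH_2)^q)\Phi((A_0+tH_1)^p)^{1/2}\bigr\}^s
\end{equation*}
for every $A_0\in\bP_n$, $B_0\in\bP_m$ and Hermitian $H_1\in\bM_n$, $H_2\in\bM_m$. The plan is to extend $\ffi$ analytically to the upper half-plane $\bC^+$ and prove $\Im\ffi(t)\ge 0$ there; by a standard Herglotz--Nevanlinna argument (as exploited in \cite{Ep} and \cite{Hi1}) this forces $\ffi$ to be concave on the real interval on which $A_0+tH_1$ and $B_0+tH_2$ remain positive definite. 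For $0\le p\le 1$ the scalar map $z\mapsto z^p$ is a Pick function, and its Nevanlinna integral representation lifts, via the holomorphic functional calculus, to an analytic extension of $X\mapsto X^p$ to matrices with spectrum off $(-\infty,0]$. For $t\in\bC^+$ the matrix $A_0+tH_1$ has spectrum in $\bC^+$, so $(A_0+tH_1)^p$ is analytic in $t$ with numerical sector of opening at most $p\pi$; strict positivity of $\Phi$ preserves the sector, and analogously $\Psi((B_0+tH_2)^q)$ sits in a sector of opening $q\pi$.

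The technical heart of the argument is the sector tracking of the sandwich and its $s$-th power. Defining $\Phi((A_0+tH_1)^p)^{1/2}$ through the holomorphic functional calculus along the branch cut $(-\infty,0]$ produces a matrix of sector opening at most $p\pi/2$; the triple product therefore has sector opening at most $p\pi/2+q\pi+p\pi/2=(p+q)\pi$, and its $s$-th power (likewise defined through the holomorphic calculus) has sector opening at most $s(p+q)\pi\in[(p+q)\pi/2,\,\pi]$ for $s\in[1/2,1/(p+q)]$. This operator-valued $\Im\ge 0$ survives passage to the trace, yielding $\Im\ffi(t)\ge 0$ on $\bC^+$ and closing the argument.

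The step I expect to be hardest is exactly this sector tracking, and it is precisely where Epstein's classical argument has to be sharpened. The intermediate matrices $\Phi(A^p)^{1/2}$ and the sandwich are non-normal, so the ``sector'' is not literally a spectral quantity; it must be controlled either through the numerical range or through explicit Nevanlinna integral representations for $z^{1/2}$ and $z^s$, and the $\Im\ge 0$ property of each non-commuting factor then has to be combined through a careful holomorphic-calculus argument. The lower bound $s\ge 1/2$ enters at this point, as a constraint coming from the interplay between the $1/2$-sector of the square root and the outer $s$-th power; pushing the bookkeeping through under the stated range $1/2\le s\le 1/(p+q)$, uniformly over arbitrary strictly positive $\Phi,\Psi$, is the main technical hurdle and what carries the result beyond \cite[Theorem~2.3]{Hi1}.
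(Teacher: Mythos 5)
Your proposal correctly identifies the paper's method (Epstein's Pick-function argument), but as written it has several genuine gaps. First, the parametrization is wrong: with $t\in\bC^+$ and $H_1$ an arbitrary Hermitian direction, $\Im(A_0+tH_1)=(\Im t)H_1$ need not be positive, so $(A_0+tH_1)^p$ is not even well defined by the holomorphic calculus you invoke. The paper instead sets $X(z)=zA+H$ (positive matrix as the coefficient of $z$), so that $\Im X(z)=(\Im z)A>0$, and recovers the target function $\Tr\{\Phi((A+xH)^p)^{1/2}\cdots\}^s$ for small $x>0$ from $\ffi$ on $(R,\infty)$ via the homogeneity relation $x\ffi(x^{-1})$, which requires $\gamma s=(p+q)s=1$. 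That is why the paper first reduces $1/2\le s<1/(p+q)$ to $s=1/(p+q)$ (using operator monotonicity/concavity of $x^\alpha$ to replace $p,q$ by $p',q'$ with $p'+q'=1/s\le 2$ --- this reduction, not the sector bookkeeping, is where $s\ge1/2$ enters); your proposal omits this reduction entirely. Second, your key inference ``\,$\Im\ffi\ge0$ on $\bC^+$ forces $\ffi$ concave on the real interval of positivity\,'' is false: a Pick function analytic across a bounded interval (e.g.\ $\tan$) need be neither concave nor convex there. Concavity in the paper comes from the Nevanlinna representation with measure supported in $(-\infty,R]$ (because $\ffi$ continues across the unbounded interval $(R,\infty)$) together with the explicit computation $\frac{d^2}{dx^2}\bigl(\frac{x(x+t)}{xt-1}\bigr)=\frac{2(t^2+1)}{(xt-1)^3}<0$ for $x\in(0,R^{-1})$, $t\le R$. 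Without the homogeneity device and the unbounded interval, the Pick property alone gives you nothing.

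Third, the ``sector tracking'' you flag as the hard step is indeed the crux, but the additive numerical-range bookkeeping you sketch ($p\pi/2+q\pi+p\pi/2$) is not how it is, or can easily be, done: the numerical range of a product of non-commuting matrices is not controlled by summing sector openings. The paper proves the \emph{spectral} inclusion $\sigma(F(z))\subset\Gamma_{(p+q)\pi}$ by a deformation argument --- (a) the inclusion holds for $|z|$ large along each ray, by the asymptotics $F(z)\sim z^{p+q}\Phi(A^p)^{1/2}\Psi(B^q)\Phi(A^p)^{1/2}$; (b),(c) the spectrum never touches the two boundary rays of the sector, shown by factoring $F(z)-rI$ and $F(z)-re^{i(p+q)\pi}I$ as $\Phi(X(z)^p)^{1/2}\{\cdots\}\Phi(X(z)^p)^{1/2}$ with the middle factor having definite imaginary part, hence invertible. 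You would need to supply this (or an equivalent) argument; as it stands your proposal records that the step is hard without giving a mechanism that works.
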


\begin{proof}
(1)\enspace
First, assume that $0\le p,q\le1$ and $1/2\le s\le1/(p+q)$. Although the proof below is
a slight improvement of Epstein's method in the proof of \cite[Theorem 2.3]{Hi1}, we shall
present it in detail while the proofs for other cases in (1) and (2) will only be sketched.
Let us first show that the assertion in the case $1/2\le s<1/(p+q)$ follows from that in
the case $s=1/(p+q)$. Indeed, when $1/2\le s<1/(p+q)$ (and also $0<p,q\le1$), one can
choose $p'\in[p,1]$ and $q'\in[q,1]$ such that $s=1/(p'+q')$. Let $A_1,A_2\in\bM_n^+$,
$B_1,B_2\in\bM_m^+$, and $0<\lambda<1$. Then, since $x^\alpha$ ($x\ge0$) with
$0<\alpha\le1$ is operator concave as well as operator monotone, we have
$$
(\lambda A_1+(1-\lambda)A_2)^p
\ge(\lambda A_1^{p/p'}+(1-\lambda)A_2^{p/p'})^{p'}
$$
so that
$$
\Phi((\lambda A_1+(1-\lambda)A_2)^p)
\ge\Phi((\lambda A_1^{p/p'}+(1-\lambda)A_2^{p/p'})^{p'}),
$$
and similarly
$$
\Psi((\lambda B_1+(1-\lambda)B_2)^q)
\ge\Psi((\lambda B_1^{q/q'}+(1-\lambda)B_2^{q/q'})^{q'}).
$$
From the assertion in the case $s=1/(p'+q')$ we have
\begin{align*}
&\Tr\big\{\Phi((\lambda A_1+(1-\lambda)A_2)^p)^{1/2}
\Psi((\lambda B_1+(1-\lambda)B_2)^q)
\Phi((\lambda A_1+(1-\lambda)A_2)^p)^{1/2}\bigr\}^s \\
&\qquad\ge\Tr\big\{\Phi((\lambda A_1^{p/p'}+(1-\lambda)A_2^{p/p'})^{p'})^{1/2}
\Psi((\lambda B_1^{q/q'}+(1-\lambda)B_2^{q/q'})^{q'}) \\
&\qquad\qquad\qquad\qquad\times
\Phi((\lambda A_1^{p/p'}+(1-\lambda)A_2^{p/p'})^{p'})^{1/2}\bigr\}^s \\
&\qquad\ge\lambda\Tr\bigl\{\Phi((A_1^{p/p'})^{p'})^{1/2}
\Psi((B_1^{q/q'})^{q'})\Phi((A_1^{p/p'})^{p'})^{1/2}\bigr\}^s \\
&\qquad\qquad\qquad+(1-\lambda)\Tr\bigl\{\Phi((A_2^{p/p'})^{p'})^{1/2}
\Psi((B_2^{q/q'})^{q'})\Phi((A_2^{p/p'})^{p'})^{1/2}\bigr\}^s \\
&\qquad=\lambda\Tr\bigl\{\Phi(A_1)^{1/2}\Psi(B_1)\Phi(A_1)^{1/2}\bigr\}^s
+(1-\lambda)\Tr\bigl\{\Phi(A_2^p)^{1/2}\Psi(B_2^q)\Phi(A_2^p)^{1/2}\bigr\}^s.
\end{align*}
Therefore, in the following proof we may assume that $s=1/(p+q)$.

Set $\gamma:=p+q\in(0,2]$ and so $s=1/\gamma$. As in \cite{Hi1} we will use the following
notations:
$$
\bC^+:=\{z\in\bC:\Im z>0\},
$$
$$
\cI_n^+:=\{X\in\bM_n:\Im X>0\},\qquad\cI_n^-:=\{X\in\bM_n:\Im X<0\},
$$
and
$$
\Gamma_{\gamma\pi}:=\{re^{i\theta}:r>0,\,0<\theta<\gamma\pi\}.
$$
Note that for each $\alpha>0$ the function $x^\alpha$ ($x>0$) has the analytic continuation
$z^\alpha$ in $\bC\setminus[0,\infty)$ (in particular, in $\bC^+$) defined by
$$
z^\alpha:=r^\alpha e^{i\alpha\theta}
\quad\mbox{for}\quad z=re^{i\theta}\ \ (r>0,\ 0<\theta<2\pi).
$$
To obtain the joint concavity result, it suffices to prove that if $A,H\in\bM_n$ and
$B,K\in\bM_m$ are such
that $A,B>0$ and $H,K$ are Hermitian, then
$$
{d^2\over dx^2}\Tr \bigl\{\Phi((A+xH)^p)^{1/2}
\Psi((B+xK)^q)\Phi((A+xH)^p)^{1/2}\bigr\}^s\le0
$$
for every sufficiently small $x>0$.

For $z\in\bC$ set $X(z):=zA+H$ and $Y(z):=zB+K$. For any $z\in\bC^+$, since
$X(z)\in\cI_n^+$, $Y(z)\in\cI_m^+$ and $p,q\in(0,1]$, we can define $X(z)^p$ and $Y(z)^q$
by analytic functional calculus by \cite[Lemma 1.1]{Hi1}. Since \cite[Lemma 1.2]{Hi1}
implies that
\begin{equation}\label{F-1.3}
\Im\Phi(X(z)^p)=\Phi(\Im X(z)^p)>0,\qquad\Im\Psi(Y(z)^q)=\Psi(\Im Y(z)^q)>0,
\end{equation}
we have $\Phi(X(z)^p),\Psi(Y(z)^q)\in\cI_l^+$ and hence $\Phi(X(z)^p)^{1/2}\in\cI_l^+$ is
also well-defined. Now define
\begin{equation}\label{F-1.4}
F(z):=\Phi(X(z)^p)^{1/2}\Psi(Y(z)^q)\Phi(X(z)^p)^{1/2},
\qquad z\in\bC^+,
\end{equation}
which is analytic in $\bC^+$. We will prove that
\begin{equation}\label{F-1.5}
\sigma(F(z))\subset\Gamma_{\gamma\pi}\quad{\rm if}\quad z\in\bC^+,
\end{equation}
where $\sigma(F(z))$ is the set of the eigenvalues of $F(z)$. To prove this, it suffices to
show the following properties:
\begin{itemize}
\item[(a)] When $z=re^{i\theta}$ with a fixed $0<\theta<\pi$,
$\sigma(F(z))\subset\Gamma_{\gamma\pi}$ for sufficiently large $r>0$.
\item[(b)] $\sigma(F(z))\cap[0,\infty)=\emptyset$ for all $z\in\bC^+$.
\item[(c)] $\sigma(F(z))\cap\{re^{i\gamma\pi}:r\ge0\}=\emptyset$ for all $z\in\bC^+$.
\end{itemize}
In fact, if \eqref{F-1.5} fails to hold for some $z_0=r_0e^{i\theta_0}\in\bC^+$, then
according to (a) and the continuity of the eigenvalues of $F(z)$ we must have
$\sigma(F(z))\cup\partial\Gamma_{\gamma\pi}\ne\emptyset$ for some
$z\in\{re^{\theta_0}:r>r_0\}$, which means that (b) or (c) must be violated.

{\it Proof of} (a).\enspace Choose an $R>0$ such that $A>R^{-1}\|H\|I_n$ and
$B>R^{-1}\|K\|I_m$. Then one can define an analytic function
$$
\tilde F(z):=z^\gamma\Phi((A+z^{-1}H)^p)^{1/2}
\Psi((B+z^{-1}K)^q)\Phi((A+z^{-1}H)^p)^{1/2}
$$
in $z\in\bC^+$ with $|z|>R$ (see \cite[Section 1]{Hi1}). It is easy to see that
$\tilde F(z)$ and $F(z)$ are continuously extended to the interval $(R,\infty)$ of the
real line so that
\begin{align*}
\tilde F(x)&=x^\gamma\Phi((A+x^{-1}H)^p)^{1/2}
\Psi((B+x^{-1}K)^q)\Phi((A+x^{-1}H)^p)^{1/2} \\
&=\Phi((xA+H)^p)^{1/2}\Psi((xB+K)^q)\Phi((xA+H)^p)^{1/2}=F(x),\quad x\in(R,\infty).
\end{align*}
We thus obtain for every $z\in\bC^+$ with $|z|>R$
\begin{equation}\label{F-1.6}
F(z):=z^\gamma\Phi((A+z^{-1}H)^p)^{1/2}
\Psi((B+z^{-1}K)^q)\Phi((A+z^{-1}H)^p)^{1/2}.
\end{equation}
When $z=re^{i\theta_0}$ with $0<\theta_0<\pi$ fixed and $r\to\infty$, note that
$$
\sigma\bigl(\Phi((A+z^{-1}H)^p)^{1/2}
\Psi((B+z^{-1}K)^q)\Phi((A+z^{-1}H)^p)^{1/2}\bigr)
$$
converges to
$S:=\sigma\bigl(\Phi(A^p)^{1/2}\Psi(B^q)\Phi(A^p)^{1/2}\bigr)\subset(0,\infty)$. Since
$(r^\gamma e^{i\gamma\theta_0})^{-1}\sigma(F(re^{i\theta_0}))$ converges to $S$ as
$r\to\infty$, we see that (a) holds.

{\it Proof of} (b).\enspace For any $r\in[0,\infty)$ we have
$$
F(z)-rI_l=\Phi(X(z)^p)^{1/2}\bigl\{\Psi(Y(z)^q)-r\Phi(X(z)^p)^{-1}\bigr\}
\Phi(X(z)^p)^{1/2}.
$$
Since $\Phi(X(z)^p),\Psi(Y(z)^q)\in\cI_l^+$ as already mentioned, we see by
\cite[Lemma 1.1]{Hi1} that
$$
\Psi(Y(z)^q)-r\Phi(Y(z)^p)^{-1}\in\cI_l^+
$$
so that $F(z)-rI_l$ is invertible.

{\it Proof of} (c).\enspace For any $r\in[0,\infty)$ we have
$$
F(z)-re^{i\gamma\pi}I_l=e^{iq\pi}\Phi(X(z)^p)^{1/2}\bigl\{\Psi(e^{-iq\pi}Y(z)^q)
-r\Phi(e^{-ip\pi}X(z)^p)^{-1}\bigr\}\Phi(X(z)^p)^{1/2}
$$
thanks to $p+q=\gamma$. Since
$\Psi(e^{-iq\pi}Y(z)^q)-r\Phi(e^{-ip\pi}X(z)^p)^{-1}\in\cI_l^-$
by \cite[Lemma 1.2]{Hi1}, $F(z)-re^{i\gamma\pi}I_l$ is invertible.

We have shown \eqref{F-1.5}. Hence we can define $F(z)^s$ for $z\in\bC^+$ by applying the
analytic functional calculus by $z^s$ on $\Gamma_{\gamma\pi}$ to $F(z)$. Since
$\gamma s=1$ by assumption, note that $z^s$ maps $\Gamma_{\gamma\pi}$ into $\bC^+$. Thus,
$F(z)^s$ is an analytic function such that $\sigma(F(z)^s)\subset\bC^+$ and so
$\Tr(F(z)^s)\in\bC^+$ for all $z\in\bC^+$ (see \cite[Section 1]{Hi1}). In view of
\eqref{F-1.6}, $F(z)^s$ in $\bC^+$ is continuously extended to the interval $(R,\infty)$
so that
$$
F(x)^s=x\bigl\{\Phi((A+x^{-1}H)^p)^{1/2}\Psi((B+x^{-1}K)^q)
\Phi((A+x^{-1}H)^p)^{1/2}\bigr\}^s,\quad x\in(R,\infty).
$$
Since $\Tr(F(x)^s)\in\bR$ for all $x\in(R,\infty)$, by the reflection principle we obtain
a Pick function $\ffi$ on $\bC\setminus(-\infty,R]$ such that $\ffi(x)=\Tr(F(x)^s)$ for all
$x\in(R,\infty)$. For every $x\in(0,R^{-1})$ we have
\begin{equation}\label{F-1.7}
x\ffi(x^{-1})=\Tr\bigl\{\Phi((A+xH)^p)^{1/2}
\Psi((B+xK)^q)\Phi((A+xH)^p)^{1/2}\bigr\}^s.
\end{equation}
It thus remains to show that
\begin{equation}\label{F-1.8}
{d^2\over dx^2}(x\ffi(x^{-1}))\le0,\qquad x\in(0,R^{-1}).
\end{equation}

According to Nevanlinna's theorem for Pick functions (see, e.g.,
\cite[Theorem 2.6.2]{Hi2}), $\ffi$ admits an integral expression
\begin{equation}\label{F-1.9}
\ffi(z)=a+bz+\int_{-\infty}^\infty{1+tz\over t-z}\,d\nu(t),
\end{equation}
where $a\in\bR$, $b\ge0$, and $\nu$ is a finite measure on $\bR$. Since $\ffi$ is
analytically continued across the interval $(R,\infty)$, the measure $\nu$ is supported in
$(-\infty,R]$. Therefore,
$$
x\ffi(x^{-1})=ax+b+\int_{-\infty}^R{x(x+t)\over xt-1}\,d\nu(t),\qquad x\in(0,R^{-1}).
$$
Compute
$$
{d\over dx}\biggl({x(x+t)\over xt-1}\biggr)={x^2t-2x-t\over(xt-1)^2},\qquad
{d^2\over dx^2}\biggl({x(x+t)\over xt-1}\biggr)={2(t^2+1)\over(xt-1)^3}<0
$$
for all $x\in(0,R^{-1})$ and all $t\in(-\infty,R]$, and hence \eqref{F-1.8} follows.

The proof for the second case where $-1\le p,q\le0$ and $1/(p+q)\le s\le-1/2$ can be done
similarly to the above but a more convenient way is to replace $\Phi$ and $\Psi$ with
$\hat\Phi(A):=\Phi(A^{-1})^{-1}$ for $A\in\bP_n$ and $\hat\Psi(B):=\Psi(B^{-1})^{-1}$
for $B\in\bP_m$, respectively. Although $\hat\Phi$ and $\hat\Psi$ are no longer linear,
the above proof can work with $\hat\Phi$ and $\hat\Psi$ in place of $\Phi$ and $\Psi$.
Indeed, in the above we only used monotonicity, property \eqref{F-1.3} and
positive homogeneity for $\Phi,\Psi$, which are valid for $\hat\Phi,\hat\Psi$ too. Since
\begin{equation}\label{F-1.10}
\bigl\{\hat\Phi(A^p)^{1/2}\hat\Psi(B^q)\hat\Phi(A^p)^{1/2}\bigr\}^s
=\bigl\{\Phi(A^{-p})^{1/2}\Psi(B^{-q})\Phi(A^{-p})^{1/2}\bigr\}^{-s},
\end{equation}
the second case of (1) immediately follows from the first case for $\hat\Phi$ and
$\hat\Psi$.

(2)\enspace
To prove the first case of (2), assume that $0\le p,q\le1$ and $-1/(p+q)\le s\le-1/2$.
As in the proof of (1) we may assume that $s=-1/(p+q)$. Then the proof is similar to
the above (1). In the present case, the analytic function $F(z)^s$ in $\bC^+$ satisfies
$\sigma(F(z)^s)\subset\bC^-$ for all $z\in\bC^+$ and is continuously extended to
$(R,\infty)$ so that
$$
F(x)^s=x^{-1}\bigl\{\Phi((A+x^{-1}H)^p)^{1/2}\Psi((B+x^{-1}K)^q)
\Phi((A+x^{-1}H)^p)^{1/2}\bigr\}^s,\quad x\in(R,\infty).
$$
Thus, by considering $\bigl\{\Tr(F(z)^s)\bigr\}^{-1}$ in place of $\Tr(F(z)^s)$, we obtain
a Pick function $\ffi$ on $\bC\setminus(-\infty,R]$ such that
$\ffi(x)=\bigl\{\Tr(F(x)^s)\}^{-1}$ for all $x\in(R,\infty)$. Since
$$
x\ffi(x^{-1})=\bigl(\Tr\bigl\{\Phi((A+xH)^p)^{1/2}
\Psi((B+xK)^q)\Phi((A+xH)^p)^{1/2}\bigr\}^s\bigr)^{-1},\quad x\in(0,R^{-1}),
$$
the function $\bigl(\Tr\bigl\{\Phi(A^p)^{1/2}\Psi(B^q)\Phi(A^p)^{1/2}\bigr\}\bigr)^{-1}$
is jointly concave in $(A,B)\in\bP_n\times\bP_m$ in the same way as above, which implies
that \eqref{F-1.1} is jointly convex. The argument in the last paragraph of the proof
of (1) can also work to prove the second case of (2).
\end{proof}

It is obvious by convergence that in Theorem \ref{T-1.1} strict positivity of $\Phi,\Psi$
is relaxed to the usual positivity and $A,B>0$ is to $A,B\ge0$ as far as all the parameters
$p$, $q$ and $s$ are non-negative. (Here, for $A\in\bM_n^+$ both conventions of $A^0$
being $I_n$ and of $A^0$ being the support projection of $A$ are available.) This remark
will be available throughout the paper.

\section{Norm functions involving operator means}

Before going into the main topic of this section we recall symmetric anti-norms
introduced in \cite{BH1}. A norm $\|\cdot\|$ on $\bM_n$ is said to be {\it symmetric} or
{\it unitarily invariant} if $\|UXV\|=\|X\|$ for all $X\in\bM_n$ and unitaries
$U,V\in\bM_n$. On the other hand, a {\it symmetric anti-norm} $\|\cdot\|_!$ on $\bM_n^+$
is a non-negative continuous functional such that
\begin{itemize}
\item[(a)] $\|\lambda A\|_!=\lambda\|A\|_!$ for all $A\in \bM_n^+ $ and all reals
$\lambda\ge0$,
\item[(b)] $\|A\|_!=\|UAU^*\|_!$ for all $A\in \bM_n^+ $ and all unitaries $U$,
\item[(c)] $\|A+B\|_!\ge\|A\|_!+\|B\|_!$ for all $A,\,B\in\bM_n^+$.
\end{itemize}
We note that a symmetric norm $\|\cdot\|$, when restricted on $\bM_n^+$, is also
characterized by the same (a), (b) and the double inequality
$\|A\|\le\|A+B\|\le\|A\|+\|B\|$ in place of the superadditivity axiom in (c). So, the
notion of symmetric anti-norms is a natural superadditive counterpart of that of symmetric
norms. We have quite a few examples of symmetric anti-norms on $\bM_n^+$. The following
are among important examples \cite{BH1,BH2}.

\begin{example}\label{E-2.1}\rm
We write $\lambda_j^\uparrow(A)$, $j=1,\dots,n$, for the eigenvalues of $A\in\bM_n^+$ in
increasing order with counting multiplicities, and similarly $\lambda_j^\downarrow(A)$,
$j=1,\dots,n$, for the eigenvalues of $A$ in decreasing order.
\begin{itemize}
\item[(i)] For $k=1,\dots,n$ the {\it Ky Fan $k$-anti-norm} is
$$
\|A\|_{\{k\}}:=\sum_{j=1}^k\lambda_j^\uparrow(A).
$$
This is the anti-norm version of the {\it Ky Fan $k$-norm}
$\|A\|_{(k)}:=\sum_{j=1}^k\lambda_j^\downarrow(A)$. It is remarkable that the trace
functional $\Tr A=\|A\|_{(n)}=\|A\|_{\{n\}}$ is a symmetric norm and a symmetric anti-norm
simultaneously.
\item[(ii)] The {\it Schatten quasi-norm} $\|A\|_p:=\{\Tr(A^p)\}^{1/p}$ when $0<p<1$ is a
symmetric anti-norm. For $p>0$ so is the {\it negative Schatten anti-norm}
$\|A\|_{-p}:=\{\Tr(A^{-p})\}^{-1/p}$ (defined to be $0$ unless $A$ is invertible). 
\item[(iii)] For $k=1,\dots,n$ the functional of Minkowski type
$$
\Delta_k(A):=\Biggl\{\prod_{j=1}^k\lambda_j^\uparrow(A)\Biggr\}^{1/k}.
$$
is a symmetric anti-norm. In particular, $\Delta_n(A)=\det^{1/n}A$ is the so-called
{\it Minkowski functional}.
\end{itemize}
\end{example}

In this section we deal with convexity or concavity properties for symmetric norm or
anti-norm functions of the form $\|\{\Phi(A^p)\,\sigma\,\Psi(B^q)\}^s\|$ or
$\|\{\Phi(A^p)\,\sigma\,\Psi(B^q)\}^s\|_!$ involving an operator mean $\sigma$ in the
Kubo-Ando sense \cite{KA}. As in the previous section we assume that $(p,q)\ne(0,0)$ and
$s\ne0$.

\begin{thm}\label{T-2.2}
Let $n,m,l\in\bN$. Let $\Phi:\bM_n\to\bM_l$ and $\Psi:\bM_m\to\bM_l$ be strictly positive
linear maps, and $\sigma$ be any operator mean in the Kubo-Ando sense. Assume that either
$0\le p,q\le1$ and $0<s\le1/\max\{p,q\}$, or $-1\le p,q\le0$ and $1/\min\{p,q\}\le s<0$.
\begin{itemize}
\item[\rm(1)] For every symmetric anti-norm $\|\cdot\|_!$ on $\bM_l^+$ the function
$$
(A,B)\in\bP_n\times\bP_m\longmapsto\|\{\Phi(A^p)\,\sigma\,\Psi(B^q)\}^s\|_!
$$
is jointly concave.
\item[\rm(2)] For every symmetric norm $\|\cdot\|$ on $\bM_l$ the function
$$
(A,B)\in\bP_n\times\bP_m\longmapsto\|\{\Phi(A^p)\,\sigma\,\Psi(B^q)\}^{-s}\|^{-1}
$$
is jointly concave, and hence the function
$$
(A,B)\in\bP_n\times\bP_m\longmapsto\|\{\Phi(A^p)\,\sigma\,\Psi(B^q)\}^{-s}\|
$$
is jointly convex.
\end{itemize}
\end{thm}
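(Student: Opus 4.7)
My approach combines Theorem \ref{T-1.1} with majorization techniques and the Kubo-Ando structure of $\sigma$. I begin with two structural reductions. First, the negative-parameter case of (1) reduces to the positive-parameter case via the transformation $\hat\Phi(A) := \Phi(A^{-1})^{-1}$, $\hat\Psi(B) := \Psi(B^{-1})^{-1}$, together with the Kubo-Ando identity $X^{-1} \sigma Y^{-1} = (X \sigma^* Y)^{-1}$ where $\sigma^*$ is the adjoint mean, yielding
\[
\bigl\{\hat\Phi(A^p) \, \sigma \, \hat\Psi(B^q)\bigr\}^s = \bigl\{\Phi(A^{-p}) \, \sigma^* \, \Psi(B^{-q})\bigr\}^{-s}.
\]
Second, part (2) follows from part (1) via the observation that $\|M\|_! := \|M^{-1}\|^{-1}$ is a symmetric anti-norm on $\bP_l$ whenever $\|\cdot\|$ is a symmetric norm (cf.\ \cite{BH1}), and $\|M^s\|_! = \|M^{-s}\|^{-1}$; applying (1) to this $\|\cdot\|_!$ at parameter $s$ is exactly the joint concavity of $\|\{\Phi(A^p)\sigma\Psi(B^q)\}^{-s}\|^{-1}$. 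Thus the essential task is part (1) for $0 \le p,q \le 1$ and $0 < s \le 1/\max\{p,q\}$.

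In the sub-range $0 < s \le 1$ the argument is elementary and does not invoke Theorem \ref{T-1.1}. Since $x^p$ is operator concave for $p \in [0,1]$ and positive linear maps preserve operator concavity, $A \mapsto \Phi(A^p)$ and $B \mapsto \Psi(B^q)$ are operator concave on $\bP_n,\bP_m$ respectively. Kubo-Ando operator means are jointly operator monotone and jointly operator concave, so $(A,B) \mapsto \Phi(A^p)\sigma\Psi(B^q)$ is jointly operator concave. Composition with $x \mapsto x^s$, which is operator monotone and operator concave for $0 < s \le 1$, preserves this joint operator concavity, and since every symmetric anti-norm is concave and monotone on $\bM_l^+$, part (1) follows in this sub-range.

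The harder sub-range is $1 < s \le 1/\max\{p,q\}$, which is non-empty only when $\max\{p,q\} < 1$. Here $x^s$ is no longer operator concave, and Theorem \ref{T-1.1} must be invoked through a majorization argument. Since symmetric anti-norms are generated by the Ky Fan anti-norms $\|\cdot\|_{\{k\}}$ (weak super-majorization is equivalent to dominance in all $\|\cdot\|_{\{k\}}$), it suffices to prove joint concavity under each $\|\cdot\|_{\{k\}}$. My plan is to combine the Kubo-Ando integral representation
\[
A \sigma B \;=\; aA + bB + \int_0^\infty \frac{t+1}{t}\,(tA : B)\, d\mu(t),
\]
expressing $\sigma$ in terms of parallel sums and the two boundary means, with the variational identity $\|M\|_{\{k\}} = \min\{\Tr(PM):P\text{ a rank-}k\text{ projection}\}$. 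At a minimizing projection $P$, conjugation by $P^{1/2}$ produces compressed strictly positive linear maps $\tilde\Phi(X):=P^{1/2}\Phi(X)P^{1/2}$ and $\tilde\Psi(X):=P^{1/2}\Psi(X)P^{1/2}$ taking values in $\bM_k^+$, and the relevant trace expression then reduces to one of Lieb type $\Tr\bigl\{\tilde\Phi(A^p)^{1/2}\tilde\Psi(B^q)\tilde\Phi(A^p)^{1/2}\bigr\}^s$, which is covered by Theorem \ref{T-1.1}.

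The main obstacle is that the admissible $s$-range in Theorem \ref{T-1.1} is $1/2 \le s \le 1/(p+q)$, strictly narrower than $1 < s \le 1/\max\{p,q\}$ since $\max\{p,q\} \le p+q$. The additional concavity room reflects the intrinsic bound $A\sigma B \le (A+B)/2$ for Kubo-Ando means, absent for the sandwich product of Theorem \ref{T-1.1}. Bridging this gap near the boundary $s=1/\max\{p,q\}$ is the delicate technical step: in the integral-plus-compression reduction outlined above, the effective exponent sum $p+q$ must be replaced by $\max\{p,q\}$, essentially by absorbing the smaller of the two powers into the scalar or identity boundary term of the mean. This bookkeeping — ensuring that Theorem \ref{T-1.1} can be invoked with the correct effective parameters after compression — is where I expect the proof to demand the most care.
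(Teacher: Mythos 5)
Your overall architecture is sound — the reduction of the negative-parameter case via $\hat\Phi(A)=\Phi(A^{-1})^{-1}$ and the adjoint mean, the derivation of (2) from (1) through the derived anti-norm $\|M^{-1}\|^{-1}$, and the elementary treatment of $0<s\le1$ by joint operator concavity all match the paper (the paper routes the last of these through the same Ky Fan machinery as the hard case, but your direct argument via monotonicity and superadditivity of anti-norms is valid). The Ky Fan reduction with a compressing projection is also the paper's device, although note that the transformer \emph{equality} for $\sigma$ requires an invertible conjugating element, which is why the paper compresses by $E+\eps I_l$ and lets $\eps\searrow0$ rather than by a projection.

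The genuine gap is exactly where you flag it: the sub-range $1<s\le1/\max\{p,q\}$. Your plan — expand $\sigma$ by the Kubo--Ando integral representation into parallel sums and then invoke Theorem \ref{T-1.1} for the compressed Lieb-type trace — cannot be completed. First, Theorem \ref{T-1.1} only reaches $s\le1/(p+q)$, and no amount of bookkeeping with the boundary terms of the mean converts $p+q$ into $\max\{p,q\}$; the extra room really does come from the bound $A\,\sigma\,B\le(A+B)/2$, which is invisible once you have replaced $\sigma$ by a sandwich product. Second, the integral representation sits \emph{inside} the outer power $x^s$ and the trace, and $\Tr(\,\cdot\,)^s$ with $s>1$ is not superadditive over the integrand, so you cannot apply Theorem \ref{T-1.1} termwise. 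The paper avoids this entirely: its Lemma \ref{L-2.3} first reduces $1\le s\le1/\max\{p,q\}$ to the single case $q=p$, $s=1/p$ (using the transposed mean $\sigma'$ to assume $q\le p$, setting $p'=1/s$, and exploiting operator concavity of $x^{p/p'}$ and $x^{q/p'}$ together with monotonicity of $\sigma$), and then proves joint concavity of $\Tr\{\Phi(A^p)\,\sigma\,\Psi(B^p)\}^{1/p}$ by running Epstein's analytic-continuation method \emph{directly on the operator mean}, i.e.\ on $F(z)=\Phi(X(z)^p)\,\sigma\,\Psi(Y(z)^p)$ as in \cite[Theorem 4.3]{Hi1}, showing its spectrum stays in the sector $\Gamma_{p\pi}$. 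That separate analytic input — not a reduction to Theorem \ref{T-1.1} — is the missing ingredient in your proposal.
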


To prove joint concavity of the anti-norm function, we shall use the particular case with
the trace function, which we first show as a lemma. Even this trace function case extends
\cite[Theorem 4.3]{Hi1}.

\begin{lemma}\label{L-2.3}
Let $\Phi$, $\Psi$ and $\sigma$ be as in Theorem \ref{T-2.2}. Under the same assumptions
of $p$, $q$ and $s$ as in Theorem \ref{T-2.2}, the function
$$
(A,B)\in\bP_n\times\bP_m\longmapsto
\Tr\{\Phi(A^p)\,\sigma\,\Psi(B^q)\}^s
$$
is jointly concave.
\end{lemma}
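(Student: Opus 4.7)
The plan is to adapt the Epstein--method argument of Theorem~\ref{T-1.1} to the operator--mean setting. For the easy subrange $0<s\le1$, joint concavity is immediate by composition: $(A,B)\mapsto\Phi(A^p)\,\sigma\,\Psi(B^q)$ is jointly operator concave, since the powers $x^p,x^q$ are operator concave on $[0,\infty)$ for $p,q\in[0,1]$, the positive linear maps $\Phi,\Psi$ preserve operator concavity, and a Kubo--Ando mean $\sigma$ is jointly operator concave and operator monotone; meanwhile $X\mapsto\Tr X^s$ is concave and monotone non-decreasing on $\bP_l$ for $s\in(0,1]$, so composing the two yields a jointly concave scalar function. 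For $1<s\le1/\max\{p,q\}$ (which forces $\max\{p,q\}<1$), I would assume WLOG $p\ge q$ and reduce to the extremal case $s=1/p$ by choosing $p'\in[p,1]$ with $s=1/p'$ and invoking operator concavity of $x^{p/p'}$ together with monotonicity of $\Phi$, of $\sigma$ in its first argument, and of $\Tr(\cdot)^s$ for $s>0$, exactly as at the start of the proof of Theorem~\ref{T-1.1}.

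At the extremal parameters $s=1/p$, $p\ge q$, I would run Epstein's method. With $X(z):=zA+H$ and $Y(z):=zB+K$ for $z\in\bC^+$, set
\[
F(z):=\Phi(X(z)^p)\,\sigma\,\Psi(Y(z)^q),
\]
where $\sigma$ is analytically extended to $\cI_l^+\times\cI_l^+$ via its representing Pick function $f$, or equivalently through the Kubo--Ando integral representation
\[
C\,\sigma\,D=aC+bD+\int_{(0,\infty)}(1+\lambda)\bigl(C^{-1}+\lambda D^{-1}\bigr)^{-1}d\mu(\lambda).
\]
The aim is the spectral inclusion $\sigma(F(z))\subset\Gamma_{p\pi}$ for $z\in\bC^+$, verified through analogues of conditions (a), (b), (c) in Theorem~\ref{T-1.1}. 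Condition (a) follows from the leading behaviour $F(re^{i\theta})\sim r^pe^{ip\theta}\,a\Phi(A^p)$ as $r\to\infty$, since $q\le p$ forces the argument of $f$ to tend to $0$ and thus $f$ to the scalar $f(0^+)=a$. Condition (b) is immediate because $F(z)\in\cI_l^+$: both the arithmetic summands $aC,bD$ and each parallel-sum integrand $(C^{-1}+\lambda D^{-1})^{-1}$ preserve $\cI^+$ (using that $C,D\in\cI^+$ forces $C^{-1},D^{-1}\in\cI^-$, hence their positive combination is in $\cI^-$ and its inverse back in $\cI^+$). Condition (c), the delicate step, is handled piecewise through the integral representation: one checks that $e^{-ip\pi}\Phi(X(z)^p)$, $e^{-ip\pi}\Psi(Y(z)^q)$ (using $q\le p$), and each $e^{-ip\pi}\bigl(\Phi(X(z)^p)^{-1}+\lambda\Psi(Y(z)^q)^{-1}\bigr)^{-1}$ lies in $\cI_l^-$, via the cone-containment lemmas behind \cite[Lemma~1.2]{Hi1}. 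Granted $\sigma(F(z))\subset\Gamma_{p\pi}$, the function $F(z)^{1/p}$ has spectrum in $\bC^+$, so $\ffi(z):=\Tr F(z)^{1/p}$ is a Pick function, and Nevanlinna's representation together with the reflection principle and the second-derivative computation for $x\ffi(x^{-1})$ conclude the proof verbatim as in Theorem~\ref{T-1.1}.

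The case $-1\le p,q\le0$, $1/\min\{p,q\}\le s<0$ is reduced to the first case by the usual duality: replacing $\Phi,\Psi$ with $\hat\Phi(A):=\Phi(A^{-1})^{-1}$, $\hat\Psi(B):=\Psi(B^{-1})^{-1}$ (which retain the monotonicity, imaginary-part preservation and positive homogeneity used in the Epstein argument) and $\sigma$ with the adjoint Kubo--Ando mean $\sigma^*$ defined by $C\,\sigma^*\,D:=(C^{-1}\,\sigma\,D^{-1})^{-1}$, the identity
\[
\bigl\{\hat\Phi(A^p)\,\sigma\,\hat\Psi(B^q)\bigr\}^s=\bigl\{\Phi(A^{-p})\,\sigma^*\,\Psi(B^{-q})\bigr\}^{-s}
\]
translates the positive case for $\sigma$ into the negative case for $\sigma^*$; since the adjoint is an involution on Kubo--Ando means, the conclusion covers all means. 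The main obstacle throughout is condition (c): the clean sandwich factorization available in Theorem~\ref{T-1.1} is missing, and one must work piecewise through the integral representation of $\sigma$, with the smaller rotation angle $\max\{p,q\}\pi$ (versus the $(p+q)\pi$ of the sandwich case) being enforced at precisely the step where the asymmetry $q\le p$ is invoked.
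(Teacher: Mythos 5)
Your reduction for the range $1<s\le 1/\max\{p,q\}$ is where the argument breaks. You lower only the first exponent, from $p$ to $p'=1/s$, and then propose to run Epstein's method on $F(z)=\Phi(X(z)^p)\,\sigma\,\Psi(Y(z)^q)$ with $s=1/p$ but $q$ possibly strictly smaller than $p$. The spectral inclusion $\sigma(F(z))\subset\Gamma_{p\pi}$ is in fact fine (your conditions (a)--(c) can even be replaced by a direct numerical-range/sector argument), and $\ffi(z):=\Tr F(z)^{1/p}$ is indeed a Pick function. But the step you call ``verbatim'' fails: the analogue of \eqref{F-1.7} requires the identity $x\ffi(x^{-1})=\Tr\{\Phi((A+xH)^p)\,\sigma\,\Psi((B+xK)^q)\}^{1/p}$, and a Kubo--Ando mean is only \emph{jointly} homogeneous, so
$$
F(x^{-1})=x^{-p}\Phi((A+xH)^p)\,\sigma\,x^{-q}\Psi((B+xK)^q)
=x^{-p}\bigl\{\Phi((A+xH)^p)\,\sigma\,x^{\,p-q}\Psi((B+xK)^q)\bigr\}.
$$
The spurious factor $x^{p-q}$ inside the second slot does not go away unless $p=q$, so $x\ffi(x^{-1})$ is not the function whose concavity you need, and the Nevanlinna computation proves nothing about the target. (Contrast with Theorem \ref{T-1.1}, where $\Phi(A^p)^{1/2}\Psi(B^q)\Phi(A^p)^{1/2}$ scales by the single power $x^{p+q}$, which is exactly why $s=1/(p+q)$ is the right normalization there.)

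The repair is to symmetrize \emph{both} exponents in the reduction step, which is what the paper does: with $p'=1/s\in[\max\{p,q\},1]$ one has both $\Phi(((A_1+A_2)/2)^p)\ge\Phi(((A_1^{p/p'}+A_2^{p/p'})/2)^{p'})$ and $\Psi(((B_1+B_2)/2)^q)\ge\Psi(((B_1^{q/p'}+B_2^{q/p'})/2)^{p'})$, so monotonicity of $\sigma$ in both arguments reduces everything to the genuinely homogeneous core case $p=q=p'$, $s=1/p'$, where Epstein's method closes. A secondary slip in your condition (a): when $q<p$ and $f(0^+)=a=0$ (harmonic or geometric mean, say), $F(z)$ grows like $|z|^q$, not like $r^pe^{ip\theta}a\Phi(A^p)$; this is harmless but becomes moot once the core case is taken with $p=q$. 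Your easy case $0<s\le1$ and the passage to negative exponents via $\hat\Phi,\hat\Psi,\sigma^*$ match the paper, except that for $\hat\Phi,\hat\Psi$ you should also note that the joint-concavity inequality \eqref{F-2.1} needs operator concavity of $A\mapsto\Phi(A^{-p})^{-1}$ (Lemma \ref{L-2.4}), not just monotonicity.
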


\begin{proof}
First, assume that $0<p,q\le1$ and $0<s\le1$. Let $A_1,A_2\in\bP_n$ and $B_1,B_2\in\bP_m$. 
By monotonicity and joint concavity of $\sigma$ we have
\begin{align}
&\Phi\biggl(\biggl({A_1+A_2\over2}\biggr)^p\biggr)
\,\sigma\,\Psi\biggl(\biggl({B_1+B_2\over2}\biggr)^q\biggr) \nonumber\\
&\qquad\ge\biggl({\Phi(A_1^p)+\Phi(A_2^p)\over2}\biggr)
\,\sigma\,\biggl({\Psi(B_1^q)+\Psi(B_2^q)\over2}\biggr) \label{F-2.1}\\
&\qquad\ge{1\over2}\bigl\{(\Phi(A_1^p)\,\sigma\,\Psi(B_1^q))
+(\Phi(A_2^p)\,\sigma\,\Psi(B_2^q))\bigr\} \nonumber
\end{align}
and so
\begin{align*}
&\biggl\{\Phi\biggl(\biggl({A_1+A_2\over2}\biggr)^p\biggr)
\,\sigma\,\Psi\biggl(\biggl({B_1+B_2\over2}\biggr)^q\biggr)\biggr\}^s \\
&\qquad\ge{1\over2}\bigl\{(\Phi(A_1^p)\,\sigma\,\Psi(B_1^q))^s
+(\Phi(A_2^p)\,\sigma\,\Psi(B_2^q))^s\bigr\}.
\end{align*}
Secondly, assume that $0<p,q\le1$ and $1\le s\le1/\max\{p,q\}$. By taking account of the
transposed mean $A\,\sigma'\,B:=B\,\sigma\,A$ \cite{KA}, we may further assume that
$q\le p$ and so $1\le s\le1/p$. We show that the assertion in this case follows from that
in the more particular case $q=p$ and $s=1/p$. Indeed, let $p':=1/s\in[p,1]$. Since
\begin{align*}
\Phi\biggl(\biggl({A_1+A_2\over2}\biggr)^p\biggr)
&\ge\Phi\biggl(\biggl({A_1^{p/p'}+A_2^{p/p'}\over2}\biggr)^{p'}\biggr), \\
\Psi\biggl(\biggl({B_1+B_2\over2}\biggr)^q\biggr)
&\ge\Psi\biggl(\biggl({B_1^{q/p'}+B_2^{q/p'}\over2}\biggr)^{p'}\biggr),
\end{align*}
the joint concavity assertion for $p,q$ both replaced with $p'$ implies that
\begin{align*}
&\Tr\biggl\{\Phi\biggl(\biggl({A_1+A_2\over2}\biggr)^p\biggr)
\,\sigma\,\Psi\biggl(\biggl({B_1+B_2\over2}\biggr)^q\biggr)\biggr\}^s \\
&\qquad\ge\Tr\biggl\{\Phi\biggl(\biggl({A_1^{p/p'}+A_2^{p/p'}\over2}\biggr)^{p'}\biggr)
\,\sigma\,\Psi\biggl(\biggl({B_1^{q/p'}+B_2^{q/p'}\over2}\biggr)^{p'}\biggr)\biggr\}^s \\
&\qquad\ge{1\over2}\Bigl(
\Tr\bigl\{\Phi((A_1^{p/p'})^{p'})\,\sigma\,\Psi((B_1^{q/p'})^{p'})\bigr\}^s
+\Tr\bigl\{\Phi((A_2^{p/p'})^{p'})\,\sigma\,\Psi((B_2^{q/p'})^{p'})\bigr\}^s\Bigr) \\
&\qquad={1\over2}\Bigl(\Tr\bigl\{\Phi(A_1^p)\,\sigma\,\Psi(B_1^q)\bigr\}^s
+\Tr\bigl\{\Phi(A_2^p)\,\sigma\,\Psi(B_2^q)\bigr\}^s\Bigr).
\end{align*}
Hence the proof is reduced to joint concavity of
$\Tr\{\Phi(A^p)\,\sigma\,\Psi(B^p)\}^{1/p}$ when $0<p\le1$. Now the proof can be done by
a slight modification of that of \cite[Theorem 4.3]{Hi1} (also that of Theorem
\ref{T-1.1}\,(1) above). We omit the details.

Finally, to treat the case where $-1\le p,q\le0$ and $1/\min\{p,q\}\le s<0$, we can use
the same technique as in the last paragraph of the proof of Theorem \ref{T-1.1}\,(1).
Consider $\hat\Phi,\hat\Psi$ as given there and the adjoint operator mean
$A\,\sigma^*\,B:=(A^{-1}\,\sigma\,B^{-1})^{-1}$ \cite{KA}; then we notice that
$$
\bigl\{\hat\Phi(A^p)\,\sigma^*\,\hat\Psi(B^q)\bigr\}^s
=\{\Phi(A^{-p})\,\sigma\,\Psi(B^{-q})\}^{-s}.
$$
Hence it remains to see that the joint concavity assertion for $\Phi,\Psi$ is valid for
$\hat\Phi,\hat\Psi$ too. Indeed, inequality \eqref{F-2.1} follows from Lemma \ref{L-2.4}
below and all other arguments in the above proof can be repeated with only use of
monotonicity of $\hat\Phi,\hat\Psi$. Moreover, the proof of \cite[Theorem 4.3]{Hi1} can
easily be modified to obtain joint concavity of
$\Tr\bigl\{\hat\Phi(A^p)\,\sigma\,\hat\Psi(B^p)\bigr\}^{1/p}$ when $0<p\le1$.
\end{proof}

\begin{lemma}\label{L-2.4}
Let $\Phi:\bM_n\to\bM_l$ be a strictly positive linear map. If $0\le p\le1$, then the
function $A\in\bP_n\mapsto\Phi(A^{-p})^{-1}=\hat\Phi(A^p)$ is operator concave.
\end{lemma}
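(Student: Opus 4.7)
The plan is to exhibit $\hat\Phi(A^p)=\Phi(A^{-p})^{-1}=\Phi((A^p)^{-1})^{-1}$ as a composition of two operator-concave, operator-monotone maps, and then apply the general fact that such a composition is again operator concave. Concretely, set $f(A):=A^p$ and $g(B):=\Phi(B^{-1})^{-1}$, so that $\hat\Phi(A^p)=(g\circ f)(A)$ on $\bP_n$.

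Two classical inputs supply everything needed. First, by L\"owner's theorem, for $0\le p\le1$ the map $f\colon\bP_n\to\bP_n$ is operator monotone and operator concave. Second, Ando's concavity theorem (valid for an arbitrary positive linear map $\Phi$, not just the completely positive case) asserts that $g\colon\bP_n\to\bP_l$ is operator concave. Its operator monotonicity is immediate: $B_1\le B_2$ forces $B_1^{-1}\ge B_2^{-1}$, hence $\Phi(B_1^{-1})\ge\Phi(B_2^{-1})$ by positivity of $\Phi$, and one more inversion gives $g(B_1)\le g(B_2)$.

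The composition step is then routine. For $A_1,A_2\in\bP_n$ and $\lambda\in[0,1]$, concavity of $f$ combined with monotonicity of $g$ gives
\begin{equation*}
g\bigl(f(\lambda A_1+(1-\lambda)A_2)\bigr)\ge g\bigl(\lambda f(A_1)+(1-\lambda)f(A_2)\bigr),
\end{equation*}
and concavity of $g$ bounds the right-hand side below by $\lambda g(f(A_1))+(1-\lambda)g(f(A_2))$, producing the desired inequality for $g\circ f=\hat\Phi(\,\cdot\,^p)$. The argument presents essentially no obstacle; the one point that needs care is invoking Ando's theorem in the correct generality, namely for arbitrary (strictly) positive linear maps, which is precisely what Ando's original statement provides. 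Should one prefer to avoid the external citation, an alternative route is the integral representation $t^{-p}=\frac{\sin(p\pi)}{\pi}\int_0^\infty\lambda^{-p}(\lambda+t)^{-1}\,d\lambda$ for $0<p<1$, which would reduce matters to the analogous concavity for each resolvent $A\mapsto\Phi((\lambda+A)^{-1})^{-1}$ and a limiting argument; the composition route is shorter and more transparent.
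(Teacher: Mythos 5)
Your proposal is correct and takes essentially the same approach as the paper: both proofs rest on operator concavity of $x^p$ for $0\le p\le1$ together with Ando's result that $B\mapsto\Phi(B^{-1})^{-1}$ is operator concave (and monotone), the paper merely writing out the resulting midpoint inequality chain directly instead of packaging it as a composition lemma.
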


\begin{proof}
Recall \cite[Corollary 3.2]{An} that $A\in\bP_n\mapsto\Phi(A^{-1})^{-1}$ is operator
concave. For every $A,B\in\bP_n$, since $((A+B)/2)^{-p}\le((A^p+B^p)/2)^{-1}$, we have
$$
\Phi\biggl(\biggl({A+B\over2}\biggr)^{-p}\biggr)^{-1}
\ge\Phi\biggl(\biggl({A^p+B^p\over2}\biggr)^{-1}\biggr)^{-1}
\ge{\Phi(A^{-p})^{-1}+\Phi(B^{-p})^{-1}\over2}.
$$
\end{proof}

\noindent
{\it Proof of Theorem \ref{T-2.2}.}\enspace
(1)\enspace
For every $A_1,A_2\in\bM_n^+$ and $B_1,B_2\in\bM_m^+$ and every Ky Fan $k$-anti-norm
$\|\cdot\|_{\{k\}}$, $1\le k\le l$, there exists a rank $k$ projection $E$ commuting with
$\Phi(((A_1+A_2)/2)^p)\,\sigma\,\Psi(((B_1+B_2)/2)^q)$ such that
\begin{align}
&\bigg\|\biggl\{\Phi\biggl(\biggl({A_1+A_2\over2}\biggr)^p\biggr)
\,\sigma\,\Psi\biggl(\biggl({B_1+B_2\over2}\biggr)^q\biggr)\biggr\}^s\bigg\|_{\{k\}}
\nonumber\\
&\quad=\Tr\biggl\{E\biggl(\Phi\biggl(\biggl({A_1+A_2\over2}\biggr)^p\biggr)
\,\sigma\,\Psi\biggl(\bigg({B_1+B_2\over2}\biggr)^q\biggr)\biggr)E\biggr\}^s \nonumber\\
&\quad=\lim_{\eps\searrow0}
\Tr\biggl\{(E+\eps I_l)\biggl(\Phi\biggl(\biggl({A_1+A_2\over2}\biggr)^p\biggr)
\,\sigma\,\Psi\biggl(\bigg({B_1+B_2\over2}\biggr)^q\biggr)\biggr)(E+\eps I_l)\biggr\}^s
\nonumber\\
&\quad=\lim_{\eps\searrow0}
\Tr\biggl\{\biggl((E+\eps I_l)\biggl(\Phi\biggl(\biggl({A_1+A_2\over2}\biggr)^p\biggr)
(E+\eps I_l)\biggr) \nonumber\\
&\qquad\qquad\qquad\qquad
\,\sigma\,\biggl((E+\eps I_l)\Psi\biggl(\bigg({B_1+B_2\over2}\biggr)^q\biggr)\biggr)
(E+\eps I_l)\biggr)\biggr\}^s \label{F-2.2}
\end{align}
due to the transformer equality for $\sigma$ \cite{KA}. Apply Lemma \ref{L-2.3} to the
positive linear maps $(E+\eps I_l)\Phi(\cdot)(E+\eps I_l)$ and
$(E+\eps I_l)\Psi(\cdot)(E+\eps I_l)$ to obtain
\begin{align}
&\Tr\biggl\{\biggl((E+\eps I_l)\biggl(\Phi\biggl(\biggl({A_1+A_2\over2}\biggr)^p\biggr)
(E+\eps I_l)\biggr) \nonumber\\
&\qquad\qquad\qquad
\,\sigma\,\biggl((E+\eps I_l)\Psi\biggl(\bigg({B_1+B_2\over2}\biggr)^q\biggr)\biggr)
(E+\eps I_l)\biggr)\biggr\}^s \nonumber\\
&\quad\ge{1\over2}\Bigl(\Tr\bigl\{\bigl((E+\eps I_l)\Phi(A_1^p)(E+\eps I_l)\bigr)
\,\sigma\,\bigl((E+\eps I_l)\Psi(B_1^q)(E+\eps I_l)\bigr)\bigr\}^s \nonumber\\
&\qquad\qquad\qquad
+\Tr\bigl\{\bigl((E+\eps I_l)\Phi(A_2^p)(E+\eps I_l)\bigr)
\,\sigma\,\bigl((E+\eps I_l)\Psi(B_2^q)(E+\eps I_l)\bigr)\bigr\}^s\Bigr) \nonumber\\
&\quad={1\over2}\Bigl(\Tr\bigl\{(E+\eps I_l)\bigl(\Phi(A_1^p)\,\sigma\,\Psi(B_1^q)\bigr)
(E+\eps I_l)\bigr\}^s \nonumber\\
&\qquad\qquad\qquad
+\Tr\bigl\{(E+\eps I_l)\bigl(\Phi(A_2^p)\,\sigma\,\Psi(B_2^q)\bigr)(E+\eps I_l)
\bigr\}^s\Bigr) \nonumber\\
&\quad\longrightarrow{1\over2}\Bigl(
\Tr\bigl\{E\bigl(\Phi(A_1^p)\,\sigma\,\Psi(B_1^q)\bigr)E\bigr\}^s
+\Tr\bigl\{E\bigl(\Phi(A_2^p)\,\sigma\,\Psi(B_2^q)\bigr)E\bigr\}^s\Bigr)
\quad\mbox{as $\eps\searrow0$}. \label{F-2.3}
\end{align}

For each $C\in\bM_l^+$, besides $\lambda_j^\uparrow(C)$ and $\lambda_j^\downarrow(C)$,
$1\le j\le l$, we write $\lambda_j^\uparrow(ECE)$ and $\lambda_j^\downarrow(ECE)$,
$1\le j\le k$, for the eigenvalues (in increasing and decreasing order, respectively) of
$ECE|_{E\bC^l}$ regarded as an element of $\bM_k^+$. Note that
\begin{equation}\label{F-2.4}
\lambda_j^\uparrow(ECE)\ge\lambda_j^\uparrow(C),\quad
\lambda_j^\downarrow(ECE)\le\lambda_j^\downarrow(C),\qquad j=1,\dots,k.
\end{equation}
Hence we have for $s>0$,
\begin{align*}
\Tr\bigl\{E\bigl(\Phi(A_1^p)\,\sigma\,\Psi(B_1^q)\bigr)E\bigr\}^s
&=\sum_{j=1}^k\bigl\{
\lambda_j^\uparrow\bigl(E\bigl(\Phi(A_1^p)\,\sigma\,\Psi(B_1^q)\bigr)E\bigr)\bigr\}^s \\
&\ge\sum_{j=1}^k\bigl\{
\lambda_j^\uparrow\bigl(\Phi(A_1^p)\,\sigma\,\Psi(B_1^q)\bigr)\bigr\}^s
=\big\|\bigl\{\Phi(A_1^p)\,\sigma\,\Psi(B_1^q)\bigr\}^s\big\|_{\{k\}},
\end{align*}
and the same inequality follows for $s<0$ as well (by replacing $\lambda_j^\uparrow$ with
$\lambda_j^\downarrow$). Similarly
$$
\Tr\bigl\{E\bigl(\Phi(A_2^p)\,\sigma\,\Psi(B_2^q)\bigr)E\bigr\}^s
\ge\big\|\bigl\{\Phi(A_2^p)\,\sigma\,\Psi(B_2^q)\bigr\}^s\big\|_{\{k\}}.
$$
Combining these with \eqref{F-2.2} and \eqref{F-2.3} yields that
\begin{align*}
&\bigg\|\biggl\{\Phi\biggl(\biggl({A_1+A_2\over2}\biggr)^p\biggr)
\,\sigma\,\Psi\biggl(\biggl({B_1+B_2\over2}\biggr)^q\biggr)\biggr\}^s\bigg\|_{\{k\}} \\
&\qquad\ge{1\over2}
\Bigl(\big\|\bigl\{\Phi(A_1^p)\,\sigma\,\Psi(B_1^q)\bigr\}^s\big\|_{\{k\}}
+\big\|\bigl\{\Phi(A_2^p)\,\sigma\,\Psi(B_2^q)\bigr\}^s\big\|_{\{k\}}\Bigr) \\
&\qquad={1\over2}
\Big\|\bigl(\bigl\{\Phi(A_1^p)\,\sigma\,\Psi(B_1^q)\bigr\}^s\bigr)^\uparrow
+\bigl(\bigl\{\Phi(A_1^p)\,\sigma\,\Psi(B_1^q)\bigr\}^s\bigr)^\uparrow\Big\|_{\{k\}},
\end{align*}
where $C^\uparrow$ for $C\in\bM_l^+$ means the diagonal matrix
$\mathrm{diag}(\lambda_1^\uparrow(C),\dots,\lambda_l^\uparrow(C))$. Therefore, for any
anti-norm,
\begin{align*}
&\bigg\|\biggl\{\Phi\biggl(\biggl({A_1+A_2\over2}\biggr)^p\biggr)
\,\sigma\,\Psi\biggl(\biggl({B_1+B_2\over2}\biggr)^q\biggr)\biggr\}^s\bigg\|_! \\
&\qquad\ge
{1\over2}\Big\|\bigl(\bigl\{\Phi(A_1^p)\,\sigma\,\Psi(B_1^q)\bigr\}^s\bigr)^\uparrow
+\bigl(\bigl\{\Phi(A_1^p)\,\sigma\,\Psi(B_1^q)\bigr\}^s\bigr)^\uparrow\Big\|_! \\
&\qquad\ge{1\over2}\Bigl(\big\|\bigl\{\Phi(A_1^p)\,\sigma\,\Psi(B_1^q)\bigr\}^s\big\|_!
+\big\|\bigl\{\Phi(A_1^p)\,\sigma\,\Psi(B_1^q)\bigr\}^s\big\|_!\Bigr)
\end{align*}
by the Ky Fan dominance principle \cite[Lemma 4.2]{BH1}, the superadditivity property,
and unitary conjugation invariance of anti-norms.

(2) is immediately seen by applying (1) to the derived anti-norm $\|A^{-1}\|^{-1}$ for
$A\in\bP_l$ \cite[Proposition 4.6]{BH2}.\qed

\bigskip
From the particular case of Theorem \ref{T-2.2} where $\Phi=\Psi=\mathrm{id}$, we have

\begin{cor}\label{C-2.5}
Let $\sigma$ be any operator mean. Assume that either $0\le p,q\le1$ and
$0<s\le1/\max\{p,q\}$, or $-1\le p,q\le0$ and $1/\min\{p,q\}\le s<0$.
\begin{itemize}
\item[\rm(1)] For any symmetric anti-norm $\|\cdot\|_!$ on $\bM_n^+$,
$\|(A^p\,\sigma\,B^q)^s\|_!$ is jointly concave in $A,B\in\bP_n$.
\item[\rm(2)] For any symmetric norm $\|\cdot\|$ on $\bM_n$,
$\|(A^p\,\sigma\,B^q)^{-s}\|^{-1}$ is jointly concave in $A,B\in\bP_n$, and hence
$\|(A^p\,\sigma\,B^q)^{-s}\|$ is jointly convex in $A,B\in\bP_n$.
\end{itemize}
\end{cor}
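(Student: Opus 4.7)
The plan is to invoke Theorem~\ref{T-2.2} with the single choice $\Phi=\Psi=\id:\bM_n\to\bM_n$, thereby obtaining the corollary as a direct specialisation. First I would verify that the hypotheses of Theorem~\ref{T-2.2} are met: in the notation there one takes $l=m=n$, and the identity map sends $\bP_n$ into $\bP_n$, so it is manifestly a strictly positive linear map. Second I would substitute $\Phi(A^p)=A^p$ and $\Psi(B^q)=B^q$ into the conclusions of Theorem~\ref{T-2.2}, which turns the expressions $\{\Phi(A^p)\,\sigma\,\Psi(B^q)\}^{\pm s}$ into $(A^p\,\sigma\,B^q)^{\pm s}$. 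Because the parameter ranges appearing in Corollary~\ref{C-2.5} coincide verbatim with those in the theorem, Part~(1) of Theorem~\ref{T-2.2} yields the joint concavity assertion in (1) of the corollary, and Part~(2) yields both assertions in (2) at once (the second being the standard deduction via the derived anti-norm $\|A^{-1}\|^{-1}$ already invoked in the proof of Theorem~\ref{T-2.2}\,(2)).

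The hard part is that there is none: the deduction is a pure specialisation and the hypotheses on $\Phi,\Psi$ hold automatically for the identity. The corollary is worth recording separately because it isolates the two-variable form without ancillary positive maps, and thereby captures the symmetric (anti-) norm analogues of the classical Minkowski-type and power-mean trace inequalities considered in \cite{CL1,Be,AnHi} (where $\sigma$ is the arithmetic mean and the anti-norm is $\Tr$) for an arbitrary operator mean $\sigma$ in the Kubo--Ando sense.
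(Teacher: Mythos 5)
Your proposal is correct and coincides with the paper's own derivation: the paper obtains Corollary~\ref{C-2.5} precisely as the special case of Theorem~\ref{T-2.2} with $\Phi=\Psi=\id$ on $\bM_n$. Nothing further is needed.
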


In \cite{CL1} Carlen and Lieb proved that the Minkowski type trace function
\begin{equation}\label{F-2.5}
(A,B)\in\bM_n^+\times\bM_n^+\longmapsto\Tr(A^p+B^p)^{1/p}
\end{equation}
is jointly concave if $0<p\le1$, jointly convex if $p=2$, and not jointly convex (also
not jointly concave) if $p>2$. The latter assertions when $p=2$ and when $p>2$ were also
shown in \cite{AnHi}, and the former when $0<p\le1$ was a bit generalized in
\cite[Theorem 2.1]{Hi1}. Bekjan \cite{Be} later treated joint concavity/convexity of trace
functions complementing \eqref{F-2.5} and proved that when $0<p\le1$,
$\Tr(A^{-p}+B^{-p})^{-1/p}$ is jointly concave in $(A,B)\in\bP_n\times\bP_n$, and
$\Tr(A^{-p}+B^{-p})^{1/p}$ and $\Tr(A^p+B^p)^{-1/p}$ are jointly convex in
$(A,B)\in\bP_n\times\bP_n$. Furthermore, in the second paper \cite{CL2} of the same title,
Carlen and Lieb affirmatively settled the conjecture that \eqref{F-2.5} is jointly convex
if $1\le p\le2$. (For the trace function \eqref{F-2.5} see also the remark
around \eqref{F-3.2} in the next section.)

The above mentioned trace inequalities in \cite{Be} are generalized by the following
special case of Theorem \ref{T-2.2} where $\sigma$ is the arithmetic mean and both
$\|\cdot\|$ and $\|\cdot\|_!$ are the trace functional.

\begin{cor}\label{C-2.6}
Let $\Phi$ and $\Psi$ be as in Theorem \ref{T-2.2}. Under the same assumption of $p$, $q$
and $s$ as in Theorem \ref{T-2.2}, $\Tr\{\Phi(A^p)+\Psi(B^q)\}^s$ and
$\bigl(\Tr\{\Phi(A^p)+\Psi(B^q)\}^{-s}\bigr)^{-1}$ are jointly concave in
$(A,B)\in\bP_n\times\bP_m$, and hence $\Tr\{\Phi(A^p)+\Psi(B^q)\}^{-s}$ is jointly convex
in $(A,B)\in\bP_n\times\bP_m$.
\end{cor}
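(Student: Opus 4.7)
The plan is to derive Corollary \ref{C-2.6} as an immediate specialization of Theorem \ref{T-2.2}. Two observations drive the argument. First, the arithmetic mean $A\,\sigma\,B=(A+B)/2$ is an operator mean in the Kubo-Ando sense, and for this choice
$$
\bigl\{\Phi(A^p)\,\sigma\,\Psi(B^q)\bigr\}^{\pm s}
=2^{\mp s}\bigl\{\Phi(A^p)+\Psi(B^q)\bigr\}^{\pm s}.
$$
Second, as recorded in Example \ref{E-2.1}(i), the trace $\Tr=\|\cdot\|_{(l)}=\|\cdot\|_{\{l\}}$ on $\bM_l^+$ is simultaneously a symmetric norm and a symmetric anti-norm, so both parts of Theorem \ref{T-2.2} can be applied with $\|\cdot\|=\|\cdot\|_!=\Tr$.

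First I would apply Theorem \ref{T-2.2}(1) with $\|\cdot\|_!=\Tr$ and $\sigma$ the arithmetic mean to obtain joint concavity of $(A,B)\mapsto\Tr\bigl\{\Phi(A^p)\,\sigma\,\Psi(B^q)\bigr\}^s$ on $\bP_n\times\bP_m$. Since this quantity equals $\Tr\bigl\{\Phi(A^p)+\Psi(B^q)\bigr\}^s$ up to the positive constant $2^{-s}$, joint concavity of $\Tr\bigl\{\Phi(A^p)+\Psi(B^q)\bigr\}^s$ follows at once. In the same vein, Theorem \ref{T-2.2}(2) with $\|\cdot\|=\Tr$ yields joint concavity of $(A,B)\mapsto\bigl(\Tr\bigl\{\Phi(A^p)\,\sigma\,\Psi(B^q)\bigr\}^{-s}\bigr)^{-1}$, and absorbing the positive factor $2^{-s}$ gives joint concavity of $(A,B)\mapsto\bigl(\Tr\bigl\{\Phi(A^p)+\Psi(B^q)\bigr\}^{-s}\bigr)^{-1}$.

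For the remaining assertion, the joint convexity of $\Tr\bigl\{\Phi(A^p)+\Psi(B^q)\bigr\}^{-s}$ is deduced from the joint concavity of its positive reciprocal via the elementary fact that $1/f$ is convex whenever $f$ is a strictly positive concave function on a convex set. There is no genuine obstacle in this corollary; once Theorem \ref{T-2.2} is in hand the result is essentially a bookkeeping step, the only points requiring attention being the scaling factor $2^{\mp s}$ that comes from the arithmetic mean and the double role of the trace recorded in Example \ref{E-2.1}(i).
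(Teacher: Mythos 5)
Your proposal is correct and follows exactly the paper's route: the paper obtains Corollary \ref{C-2.6} precisely as the special case of Theorem \ref{T-2.2} in which $\sigma$ is the arithmetic mean and both the symmetric norm and the symmetric anti-norm are taken to be the trace functional. Your explicit bookkeeping of the harmless factor $2^{\mp s}$ and of the reciprocal-of-a-positive-concave-function step only makes explicit what the paper leaves implicit.
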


\begin{cor}
Assume that $\Phi:\bM_n\to\bM_l$ and $\Psi:\bM_m\to\bM_l$ are positive linear maps such
that $\Phi(I_n)+\Psi(I_m)=I_l$.
\begin{itemize}
\item[\rm(1)] For any symmetric anti-norm $\|\cdot\|_!$ on $\bM_l^+$, the function
$$
(A,B)\in\bP_n\times\bP_m\longmapsto\|\exp\{\Phi(\log A)+\Psi(\log B)\}\|_!
$$
is jointly concave.
\item[\rm(2)] For any symmetric norm $\|\cdot\|$ on $\bM_l$, the function
$$
(A,B)\in\bP_n\times\bP_m\longmapsto\|\exp\{\Phi(-\log A)+\Psi(-\log B)\}\|^{-1}
$$
is jointly concave, and hence the function
$$
(A,B)\in\bP_n\times\bP_m\longmapsto\|\exp\{\Phi(-\log A)+\Psi(-\log B)\}\|
$$
is jointly convex.
\end{itemize}
\end{cor}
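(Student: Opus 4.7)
The plan is to derive both statements from Theorem \ref{T-2.2} by a Lie--Trotter type passage to the limit. I take $\sigma$ to be the arithmetic mean $\nabla$, set $p = q \in (0,1]$, and set $s = 1/p$ (the endpoint of the admissible range $0 < s \le 1/\max\{p,q\}$). Since $X \nabla Y = (X+Y)/2$ produces only a positive scalar factor outside the (anti-)norm, which is irrelevant for joint concavity, Theorem \ref{T-2.2}(1), together with the remark after Theorem \ref{T-1.1} relaxing strict positivity of $\Phi,\Psi$ to mere positivity when all parameters are nonnegative, gives joint concavity on $\bP_n \times \bP_m$ of
$$
(A,B) \longmapsto \bigl\|\{\Phi(A^p) + \Psi(B^p)\}^{1/p}\bigr\|_!,
$$
and Theorem \ref{T-2.2}(2) gives joint concavity of
$$
(A,B) \longmapsto \bigl\|\{\Phi(A^p) + \Psi(B^p)\}^{-1/p}\bigr\|^{-1}.
$$

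The main step is to let $p \searrow 0$. Using the expansion $A^p = I_n + p\log A + O(p^2)$ and the hypothesis $\Phi(I_n) + \Psi(I_m) = I_l$, I obtain
$$
\Phi(A^p) + \Psi(B^p) = I_l + p\bigl\{\Phi(\log A) + \Psi(\log B)\bigr\} + O(p^2).
$$
Taking the matrix logarithm, dividing by $\pm p$, and exponentiating then yields
$$
\{\Phi(A^p) + \Psi(B^p)\}^{\pm 1/p}
\longrightarrow
\exp\bigl\{\pm[\Phi(\log A) + \Psi(\log B)]\bigr\}
\quad\text{as }p \searrow 0.
$$
This matrix Lie--Trotter identity is the main technical step; once $p$ is small, the argument is close to $I_l$ (hence invertible), and a standard power-series bound on $\log(I_l + pX + O(p^2))$ controls the error uniformly on compact subsets of $\bP_n \times \bP_m$.

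Since symmetric (anti-)norms are continuous on $\bM_l^+$ and pointwise limits of jointly concave functions remain jointly concave, passing to $p \searrow 0$ in the two displays above proves (1) and the first assertion of (2). The convexity claim in (2) then follows immediately from the fact that the reciprocal of a positive concave function on a convex set is convex.
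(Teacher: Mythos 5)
Your proposal is correct and follows essentially the same route as the paper: the paper's proof (given in the two sentences after the corollary) also specializes Theorem \ref{T-2.2} to the arithmetic mean with $p=q$, $s=1/p$, uses the normalization $\Phi(I_n)+\Psi(I_m)=I_l$ to pass to the Lie--Trotter limit $\lim_{p\searrow0}\{\Phi(A^p)+\Psi(B^p)\}^{1/p}=\exp\{\Phi(\log A)+\Psi(\log B)\}$, and obtains part (2) via the derived anti-norm $\|A^{-1}\|^{-1}$ (equivalently, via Theorem \ref{T-2.2}\,(2), as you do). The only cosmetic difference is that the paper handles non-strict positivity by approximating $\Phi,\Psi$ with strictly positive maps rather than by citing the remark after Theorem \ref{T-1.1}, which amounts to the same continuity argument.
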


Indeed, we may assume by continuity that $\Phi$ and $\Psi$ are strictly positive linear
maps. Then the assertion (1) follows from Theorem \ref{T-2.2}\,(1) with $\sigma$ the
arithmetic mean since
$$
\lim_{p\searrow0}\{\Phi(A^p)+\Psi(B^p)\}^{1/p}=\exp\{\Phi(\log A)+\Psi(\log B)\}.
$$
The assertion (2) is a consequence of (1) and \cite[Proposition 4.6]{BH2} as before. By
choosing the Minkowski functional $\det^{1/n}$ as $\|\cdot\|_!$, the above (1) implies
that
$$
(A,B)\in\bP_n\times\bP_m\mapsto\exp\{\tau(\Phi(\log A)+\Psi(\log B))\}
$$
is jointly concave, where $\tau$ is the normalized trace on $\bM_l$. This is similar to
\cite[Theorem 5.2]{BH1}.

\section{Norm functions of Epstein type}

In this section we deal with convexity or concavity properties for symmetric norm or
anti-norm functions of the form $\|\Phi(A^p)^s\|$ or $\|\Phi(A^p)^s\|_!$ where $\Phi$ is a
(strictly) positive linear map. In particular, when $\Phi(A):=X^*AX$ and $\|\cdot\|$
(or $\|\cdot\|_!$) is the trace functional, the function is
\begin{equation}\label{F-3.1}
A\in\bP_n\longmapsto\Tr(X^*A^pX)^s,
\end{equation}
whose concavity when $0<p\le1$ and $s=1/p$ was established by Epstein \cite{Ep} (see
\cite{Hi1} for some generalizations). In \cite{CL2} it was proved that the function
\eqref{F-3.1} is, for any $X\in\bM_n$, convex if $1\le p\le2$ and $s\ge1/p$, concave if
$0<p\le1$ and $1\le s\le1/p$, and neither convex nor concave if $p>2$. It was also
pointed out there that joint concavity of \eqref{F-2.5} when $0<p\le1$ is easily seen
from Epstein's concavity \cite{Ep} since
\begin{equation}\label{F-3.2}
\Tr\biggl(\bmatrix I&0\\I&0\endbmatrix^*\bmatrix A&0\\0&B\endbmatrix^p
\bmatrix I&0\\I&0\endbmatrix\biggr)^{1/p}=\Tr(A^p+B^p)^{1/p}.
\end{equation}

In the rest of the section we assume that $p$ and $s$ are non-zero; otherwise, the
assertion is trivial.

\begin{thm}\label{T-3.1}
Let $n,m\in\bN$, and $\Phi:\bM_n\to\bM_m$ be a strictly positive linear map.
\begin{itemize}
\item[\rm(1)] Let $\|\cdot\|_!$ be any symmetric anti-norm on $\bM_m^+$. If either
$0<p\le1$ and $0<s\le1/p$, or $-1\le p<0$ and $1/p\le s<0$, then
$A\in\bP_n\mapsto\|\Phi(A^p)^s\|_!$ is concave.
\item[\rm(2)] Let $\|\cdot\|$ be any symmetric norm on $\bM_m$. If either $0<p\le1$ and
$0<s\le1/p$, or $-1\le p<0$ and $1/p\le s<0$, then
$A\in\bP_n\mapsto\|\Phi(A^p)^{-s}\|^{-1}$ is concave. Furthermore,
$A\in\bP_n\mapsto\|\Phi(A^p)^s\|$ is convex if one of the following three conditions is
satisfied:
\begin{equation}\label{F-3.3}
\begin{cases}
-1\le p<0\ \mbox{and}\ s>0, \\
0<p\le1\ \mbox{and}\ s<0, \\
1\le p\le2\ \mbox{and}\ s\ge1.
\end{cases}
\end{equation}
\end{itemize}
\end{thm}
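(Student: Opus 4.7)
Part~(1) and the first assertion of part~(2) should follow from Theorem~\ref{T-2.2} by restriction to the diagonal: take $m=n$, $\Psi=\Phi$, $q=p$, $B=A$ and any Kubo--Ando mean $\sigma$, and use the axiom $C\,\sigma\,C=C$ to collapse $\{\Phi(A^p)\,\sigma\,\Phi(A^p)\}^s$ to $\Phi(A^p)^s$. Joint concavity along the diagonal is ordinary concavity in $A$, so Theorem~\ref{T-2.2}(1) yields concavity of $A\mapsto\|\Phi(A^p)^s\|_!$, and Theorem~\ref{T-2.2}(2) yields concavity of $A\mapsto\|\Phi(A^p)^{-s}\|^{-1}$, each under exactly the stated hypotheses on $(p,s)$.

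For the convexity assertion I would combine two ideas. Since $1/f$ is convex whenever $f>0$ is concave, the first assertion of~(2) already gives convexity of $A\mapsto\|\Phi(A^p)^t\|$ in the ``inner'' ranges $\{0<p\le1,\ -1/p\le t<0\}$ and $\{-1\le p<0,\ 0<t\le-1/p\}$. To cover the ``outer'' half-lines $|s|\ge1$ in cases~(i)--(iii) I would use operator convexity: for $p\in[-1,0]\cup[1,2]$ the map $A\mapsto\Phi(A^p)$ is operator convex, and for $p\in(0,1]$ the composition $A\mapsto\Phi(A^p)^{-1}$ is operator convex (the operator-concave positive $\Phi(A^p)$ composed with the operator-convex decreasing $x\mapsto x^{-1}$). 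Writing $F(A)$ for this operator-convex positive map and observing that $\|B\|_{\langle r\rangle}:=\|B^r\|^{1/r}$ defines a symmetric norm on $\bP_m$ for $r\ge1$ (triangle inequality from $\lambda^\downarrow(X+Y)\prec_w\lambda^\downarrow(X)+\lambda^\downarrow(Y)$ followed by Minkowski in $\ell^r$), one gets convexity of $\|F(A)^r\|^{1/r}$ by operator convexity of $F$ plus monotonicity of $\|\cdot\|_{\langle r\rangle}$ on $\bP_m$, and hence of $\|F(A)^r\|=(\|F(A)^r\|^{1/r})^r$ since $x\mapsto x^r$ is convex and increasing. Choosing $F=\Phi(A^p)$ with $p\in[-1,0)$ handles case~(i) for $s\ge1$; $F=\Phi(A^p)^{-1}$ with $p\in(0,1]$ handles case~(ii) for $s\le-1$; $F=\Phi(A^p)$ with $p\in[1,2]$ handles case~(iii). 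The inner and outer ranges overlap and together cover~(i) and~(ii) in full, because $-1/p\ge1$ when $p\in[-1,0)$ and $-1/p\le-1$ when $p\in(0,1]$.

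The main hurdle is the power-norm lemma, namely that $\|B^r\|^{1/r}$ actually defines a symmetric norm on $\bP_m$ for $r\ge1$; the rest is bookkeeping. In the spirit of the Carlen--Lieb variational method advertised in the Introduction, case~(iii) can alternatively be handled directly by first reducing to Ky~Fan norms via the dominance principle and writing $\|\Phi(A^p)^s\|_{(k)}$ for $s\ge1$ as a supremum of expressions linear in $\Phi(A^p)$; operator convexity of $A\mapsto A^p$ for $p\in[1,2]$ then makes each summand convex in $A$, and a supremum of convex functions is convex.
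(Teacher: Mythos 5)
Your proposal is correct, and for part (1) and the first assertion of (2) it is exactly the paper's argument: specialize Theorem \ref{T-2.2} to $\Psi=\Phi$, $q=p$, $B=A$ and use $C\,\sigma\,C=C$. For the convexity assertion your skeleton also matches the paper's --- the reciprocal of the concave positive function covers the range $|s|\le-1/p$, and operator convexity of $A\mapsto\Phi(A^p)$ for $p\in[-1,0)\cup[1,2]$ (or of $A\mapsto\Phi(A^p)^{-1}$ for $p\in(0,1]$) is pushed through a norm inequality for the rest --- but the key lemma in that last step is genuinely different. The paper proves directly that $B\in\bP_m\mapsto\|B^s\|$ is monotone and convex (decreasing for $s<0$, increasing for $s\ge1$) from the Ky Fan majorization $((A+B)/2)^\downarrow\prec(A^\downarrow+B^\downarrow)/2$ together with pointwise convexity of $x^s$; in particular this disposes of all of $p\in(0,1]$, $s<0$ in one stroke, with no inner/outer split needed there. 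You instead invoke the power-norm lemma that $\|B^r\|^{1/r}$ is a symmetric norm for $r\ge1$ and then compose with $x^r$; that lemma is true and standard, but your one-line justification via ``weak majorization followed by Minkowski in $\ell^r$'' really defers to the Minkowski inequality for general symmetric gauge functions, which is the nontrivial content, so the paper's direct majorization argument is the more economical route to exactly what is needed. Your alternative for case (iii) via a supremum representation of the Ky Fan norms is closer in spirit to the paper's Theorem \ref{T-3.2} (the Carlen--Lieb variational extension to $s\ge1/p$ for CP maps) than to the proof of Theorem \ref{T-3.1} itself; note also that the quantities in that supremum are convex increasing functions of linear functionals of $\Phi(A^p)$, not linear expressions, though the conclusion still follows.
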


\begin{proof}
(1) and the first assertion of (2) are included in Theorem \ref{T-2.2} as special cases
where $B=A$, $\Psi=\Phi$ and $q=p$. For the second assertion of (2) it remains to show
that $A\in\bP_n\mapsto\|\Phi(A^p)^s\|$ is convex when $p\in[-1,0)\cup[1,2]$ and $s\ge1$
and when $p\in(0,1]$ and $s<0$. This can easily be verified as follows: For the latter
case, since $x^p$ is operator concave, we have
$\Phi(((A+B)/2)^p)\ge(\Phi(A^p)+\Phi(B^p))/2$ for $A,B\in\bP_n$. Hence we need to show
that $A\in\bP_n\mapsto\|A^s\|$ is decreasing and convex when $s<0$. If $A\le B$, then
$(A^s)^\downarrow\ge(B^s)^\downarrow$ so that $\|A^s\|\ge\|B^s\|$. For $A,B\in\bP_n$,
from the Ky Fan majorization $((A+B)/2)^\downarrow\prec(A^\downarrow+B^\downarrow)/2$
we have
\begin{align*}
\biggl(\biggl({A+B\over2}\biggr)^s\biggr)^\uparrow
&=\biggl(\biggl({A+B\over2}\biggr)^\downarrow\biggr)^s
\prec_w\biggl({A^\downarrow+B^\downarrow\over2}\biggr)^s \\
&\le{(A^\downarrow)^s+(B^\downarrow)^s\over2}
={(A^s)^\uparrow+(B^s)^\uparrow\over2}
\end{align*}
so that
$$
\bigg\|\biggl({A+B\over2}\biggr)^s\bigg\|
\le\bigg\|{(A^s)^\uparrow+(B^s)^\uparrow\over2}\bigg\|
\le{\|A^s\|+\|B^s\|\over2}.
$$
The former case is similarly shown.
\end{proof}

The above theorem does not cover the convexity assertion in \cite{CL2} for \eqref{F-3.1}
when $1\le p\le2$ and $1/p\le s<1$. But we can extend this by using the
variational method in \cite{CL2} itself in the following way. Here, we assume
a stronger assumption of $\Phi$ being completely positive (CP).

\begin{thm}\label{T-3.2}
Let $n,m\in\bN$, and $\Phi:\bM_n\to\bM_m$ be a CP linear map. Let $\|\cdot\|$ be any
symmetric norm on $\bM_m$. If $1\le p\le2$ and $s\ge1/p$, then
$A\in\bM_n^+\mapsto\|\Phi(A^p)^s\|$ is convex.
\end{thm}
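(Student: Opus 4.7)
My plan is to combine Stinespring's dilation for CP maps with the trace-convexity theorem of Carlen--Lieb \cite[Theorem~1.1(c)]{CL2} and the Ky Fan dominance principle, thereby extending that trace result to every symmetric norm.

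First, since $\Phi$ is CP, there exist $k\in\bN$ and $W\in\bM_{nk,m}$ with $\Phi(X)=W^*(X\otimes I_k)W$, so
$$
\Phi(A^p)=W^*(A\otimes I_k)^pW.
$$
As $A\mapsto A\otimes I_k$ is linear from $\bM_n^+$ into $\bM_{nk}^+$, and convexity is preserved by composition with linear maps, it suffices to prove convexity of $B\in\bM_N^+\mapsto\|(W^*B^pW)^s\|$ for any rectangular $W\in\bM_{N,m}$ (with $N:=nk$). By the Ky Fan dominance principle, this in turn reduces to proving convexity of $B\mapsto\|(W^*B^pW)^s\|_{(k')}$ for every Ky Fan $k'$-norm $\|\cdot\|_{(k')}$ on $\bM_m$.

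For $Y\in\bM_m^+$, Cauchy's interlacing gives $\lambda_j^\downarrow(U^*YU)\le\lambda_j^\downarrow(Y)$ for every isometry $U\in\bM_{m,k'}$ with $U^*U=I_{k'}$, with equality realized by taking the columns of $U$ to be the top-$k'$ eigenvectors of $Y$. This yields the variational identity
$$
\|Y^s\|_{(k')}=\max_{U^*U=I_{k'}}\Tr\bigl[(U^*YU)^s\bigr].
$$
Applying it with $Y=W^*B^pW$ and setting $X:=WU\in\bM_{N,k'}$ gives
$$
\|(W^*B^pW)^s\|_{(k')}=\max_U\Tr\bigl[(X^*B^pX)^s\bigr].
$$
For each fixed $U$ (hence each fixed rectangular $X$), \cite[Theorem~1.1(c)]{CL2} says $B\mapsto\Tr[(X^*B^pX)^s]$ is convex on $\bM_N^+$ in our range $1\le p\le 2$, $s\ge 1/p$; the Carlen--Lieb proof represents this trace, via a matrix Legendre transform, as a supremum of convex functions of $B$ using operator convexity of $B\mapsto B^p$ on $[1,2]$, and goes through unchanged for rectangular $X$ (or one may simply pad with zero blocks to reduce to the square case of \cite{CL2}). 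As a pointwise supremum of convex functions is convex, $B\mapsto\|(W^*B^pW)^s\|_{(k')}$ is convex for every $k'$. The resulting inequality, valid for every $k'$, is nothing but a weak majorization between the sorted eigenvalue vectors of $(W^*B^pW)^s$ at $B=(B_1+B_2)/2$ and the average of those vectors at $B_1,B_2$; Ky Fan dominance together with the triangle inequality of $\|\cdot\|$ then lifts this weak majorization to the desired convexity inequality for every symmetric norm.

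The main obstacle is invoking the Carlen--Lieb trace-convexity result inside the supremum over $U$: one must check that CL2's argument, which splits into two cases ($s\ge 1$ and $1/p\le s<1$) with different variational formulas in each, survives both the passage to rectangular $X$ and being placed under the supremum. Once this is granted, the Stinespring reduction, Cauchy interlacing, and Ky Fan dominance combine routinely to deliver symmetric-norm convexity from the trace result.
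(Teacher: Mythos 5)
Your proof is correct, but it takes a genuinely different route from the paper's. The paper first re-proves the trace case for a general CP map $\Phi$ directly: after disposing of $s\ge1$ via Theorem \ref{T-3.1}\,(2), it sets $r=1/s$ (so $1\le r\le p\le2$), invokes the Carlen--Lieb variational formula $\Tr\Phi(A^p)^{1/r}=\frac1r\inf_{B}\Tr\{\Phi(A^p)B^{1-r}+(r-1)B\}$, and reduces to joint convexity of $(A,B)\mapsto\Tr\Phi(A^p)B^{1-r}$, which follows from Ando's joint convexity of $A^p\otimes B^{1-r}$ together with positivity of $\Phi\otimes\id$ --- this is where complete positivity enters. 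The symmetric-norm upgrade is then done with a rank-$k$ spectral projection $E$ commuting with $\Phi(((A_1+A_2)/2)^p)$ and the eigenvalue inequality $\lambda_j^\downarrow(ECE)\le\lambda_j^\downarrow(C)$, which is essentially the same localization as your maximum over isometries. You instead spend complete positivity on a Stinespring representation, reducing $\Phi$ to a compression $B\mapsto W^*B^pW$, and then quote the Carlen--Lieb convexity of $B\mapsto\Tr(X^*B^pX)^s$ as a black box; your variational identity $\|Y^s\|_{(k')}=\max_{U^*U=I}\Tr[(U^*YU)^s]$ (valid for all $s>0$ by Cauchy interlacing and monotonicity of $t^s$) turns the norm extension into a one-line ``pointwise supremum of convex functions'' argument. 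Both routes lean on \cite{CL2} (the paper on its Lemma 2.2, you on its Theorem 1.1) and both use CP essentially; yours is shorter if the Carlen--Lieb theorem is taken as given, while the paper's is more self-contained at the trace level. One small remark: the ``main obstacle'' you flag at the end is not actually an obstacle --- a supremum of convex functions is convex regardless of how each individual convexity statement was proved, and the rectangular case of the Carlen--Lieb theorem follows by padding $X$ with zero columns exactly as you indicate.
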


\begin{proof}
By Theorem \ref{T-3.1}\,(2) we may assume that $1/p\le s\le1$. First, we prove the trace
function case. This part of the proof is a slight modification of that in \cite{CL2}. Let
$r:=1/s$ so that $1\le r\le p\le2$. By \cite[Lemma 2.2]{CL2} we have
$$
\Tr\Phi(A^p)^{1/r}
={1\over r}\inf_{B\in\bP_m}\Tr\{\Phi(A^p)B^{1-r}+(r-1)B\}.
$$
Hence it suffices to show that
$$
(A,B)\in\bP_n\times\bP_m\longmapsto\Tr\Phi(A^p)B^{1-r}
$$
is jointly convex. Since $-1\le1-r\le0$ and $1-(1-r)\le p\le2$, it follows from
\cite[Corollary 6.3]{An} that
$(A,B)\in\bP_n\times\bP_m\mapsto A^p\otimes B^{1-r}\in\bM_m\otimes\bM_m$ is jointly convex.
Hence so is
$$
(A,B)\in\bP_n\times\bP_m\longmapsto(\Phi\otimes\id)(A^p\otimes B^{1-r})
=\Phi(A^p)\otimes B^{1-r}\in\bM_m\times\bM_m,
$$
since $\Phi\otimes\id$ is positive by the CP assumption of $\Phi$. This implies the
assertion for the trace function.

Next, we extend the result to the symmetric norm function. Since the proof is similar to
(and easier than) that of Theorem \ref{T-2.2}\,(1), we only sketch it. For every
$A,B\in\bM_n^+$ and every Ky Fan $k$-norm $\|\cdot\|_{(k)}$, $1\le k\le m$, there exists a
rank $k$ projection $E$ commuting with $\Phi(((A+B)/2)^p)$ such that
$$
\bigg\|\Phi\biggl(\biggl({A+B\over2}\biggr)^p\biggr)^s\bigg\|_{(k)}
=\Tr\biggl\{E\Phi\biggl(\biggl({A+B\over2}\biggr)^p\biggr)E\biggr\}^s.
$$
Applying the trace function case to the CP linear map $E\Phi(\cdot)E$ we have
$$
\Tr\biggl\{E\Phi\biggl(\biggl({A+B\over2}\biggr)^p\biggr)E\biggr\}^s
\le{1\over2}\bigl(\Tr\{E\Phi(A^p)E\}^s+\Tr\{E\Phi(B^p)E\}^s\bigr).
$$
Moreover, using the second inequality of \eqref{F-2.4} we have
$\Tr\{E\Phi(A^p)E\}^s\le\|\Phi(A^p)^s\|_{(k)}$ and
$\Tr\{E(\Phi(B^p)E\}^s\le\|\Phi(B^p)^s\|_{(k)}$ so that
$$
\bigg\|\Phi\biggl(\biggl({A+B\over2}\biggr)^p\biggr)^s\bigg\|_{(k)}
\le{1\over2}\bigl(\|\Phi(A^p)^s\|_{(k)}+\|\Phi(B^p)^s\|_{(k)}\bigr),
$$
which implies the desired convexity assertion.
\end{proof}

Complementing Corollary \ref{C-2.6} we give

\begin{cor}
Let $n,m,l\in\bN$. Let $\Phi:\bM_n\to\bM_l$ and $\Psi:\bM_m\to\bM_l$ be CP linear maps.
Assume that $1\le p\le2$ and $s\ge1/p$. Then for any symmetric norm
$\|\cdot\|$ on $\bM_l$, the function
$$
(A,B)\in\bM_n^+\times\bM_m^+\longmapsto\|\{\Phi(A^p)+\Psi(B^p)\}^s\|
$$
is jointly convex, and in particular so is $\Tr\{\Phi(A^p)+\Psi(B^p)\}^s$ in
$(A,B)\in\bM_n^+\times\bM_m^+$.
\end{cor}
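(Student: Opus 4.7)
The plan is to reduce the two-variable statement to the single-variable statement of Theorem \ref{T-3.2} by the standard block-matrix (``direct sum'') trick, so that $\Phi(A^p)+\Psi(B^p)$ appears as $\tilde\Phi(\tilde A^p)$ for a single CP map $\tilde\Phi$ on a single positive argument $\tilde A$.

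First I would construct the auxiliary CP map. Define $\tilde\Phi:\bM_{n+m}\to\bM_l$ by
$$
\tilde\Phi(X):=\Phi(X_{11})+\Psi(X_{22}),\qquad
X=\begin{pmatrix}X_{11}&X_{12}\\X_{21}&X_{22}\end{pmatrix}\in\bM_{n+m}.
$$
Each compression $X\mapsto X_{ii}$ is CP, so $\tilde\Phi$ is a sum of compositions of CP maps and is itself CP. Then I would consider the linear embedding
$$
\iota:(A,B)\in\bM_n^+\times\bM_m^+\longmapsto
\tilde A:=\begin{pmatrix}A&0\\0&B\end{pmatrix}\in\bM_{n+m}^+,
$$
which is affine (in fact linear) in $(A,B)$. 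Because $\tilde A$ is block diagonal, the continuous functional calculus gives $\tilde A^p=\mathrm{diag}(A^p,B^p)$, and hence
$$
\tilde\Phi(\tilde A^p)=\Phi(A^p)+\Psi(B^p).
$$

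Next I would apply Theorem \ref{T-3.2} to the CP map $\tilde\Phi:\bM_{n+m}\to\bM_l$ and the symmetric norm $\|\cdot\|$: under the hypotheses $1\le p\le 2$ and $s\ge 1/p$, the function $\tilde A\in\bM_{n+m}^+\mapsto\|\tilde\Phi(\tilde A^p)^s\|$ is convex. Composing this convex function with the linear map $\iota$ preserves joint convexity in $(A,B)$, which yields precisely
$$
\bigl\|\{\Phi((\lambda A_1+(1-\lambda)A_2)^p)+\Psi((\lambda B_1+(1-\lambda)B_2)^p)\}^s\bigr\|
\le\lambda\|\cdots\|_1+(1-\lambda)\|\cdots\|_2,
$$
where the right-hand terms are the corresponding values at $(A_i,B_i)$. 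The trace functional is a symmetric norm, so the final ``in particular'' clause is an immediate specialization.

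There is essentially no obstacle beyond checking CP-ness of $\tilde\Phi$ and the block-diagonal functional calculus, both of which are routine; the whole point is that Theorem \ref{T-3.2} has been set up for an \emph{arbitrary} CP map, so the two-variable case costs nothing once the right $\tilde\Phi$ is chosen. The one thing to be slightly careful about is that $p$ is allowed to equal $1$ or $2$ and that $A,B$ are only in $\bM_n^+,\bM_m^+$ (not necessarily invertible); both are fine, since $\tilde A\in\bM_{n+m}^+$ and $x\mapsto x^p$ is continuous on $[0,\infty)$ for $p\ge 1$.
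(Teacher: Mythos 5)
Your proof is correct and is essentially identical to the paper's own argument: the paper likewise defines the CP map $\Theta\bigl(\begin{smallmatrix}A&X\\Y&B\end{smallmatrix}\bigr):=\Phi(A)+\Psi(B)$ on $\bM_{n+m}$, observes that $\{\Phi(A^p)+\Psi(B^p)\}^s=\Theta\bigl(\mathrm{diag}(A,B)^p\bigr)^s$, and invokes Theorem \ref{T-3.2}. Your slightly more explicit verification of CP-ness via compressions and of convexity under the linear embedding $(A,B)\mapsto\mathrm{diag}(A,B)$ fills in details the paper leaves as routine.
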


\begin{proof}
The proof is a slight modification of expression \eqref{F-3.2}. Consider a linear map
$\Theta:\bM_{n+m}\to\bM_l$ defined by
$$
\Theta\biggl(\begin{bmatrix}A&X\\Y&B\end{bmatrix}\biggr):=\Phi(A)+\Psi(B)
$$
in the form of block matrices with $A\in\bM_n$ and $B\in\bM_m$. It is easy to see that
$\Theta$ is CP. Since
$$
\{\Phi(A^p)+\Psi(B^p)\}^s=\Theta\biggl(\begin{bmatrix}A&0\\0&B\end{bmatrix}^p\biggr)^s,
\qquad A\in\bM_n^+,\ B\in\bM_m^+,
$$
the assertion is an immediate consequence of Theorem \ref{T-3.2}.
\end{proof}

\section{Necessary conditions}

In the previous sections we obtained sufficient conditions on the parameters $p,q,s$ (or
$p,s$) for which the relevant matrix trace or norm function is (jointly) concave or convex.
The aim of this section is to specify necessary conditions on the parameters for those
concavity/convexity properties to hold.

Concerning the necessity direction for (joint) concavity of \eqref{F-1.1} and \eqref{F-3.1}
we have

\begin{prop}\label{P-4.1}
\begin{itemize}
\item[\rm(1)] Assume that $p,s\ne0$. If $A\in\bP_2\mapsto\Tr(X^*A^pX)^s$ is concave for
any invertible $X\in\bM_2$, then either $0<p\le1$ and $0<s\le1/p$, or $-1\le p<0$ and
$1/p\le s<0$.
\item[\rm(2)] Assume that $p$, $q$ and $s$ are all non-zero. If
$(A,B)\in\bP_2\times\bP_2\mapsto\Tr(A^{p/2}B^qA^{p/2})^s$ is jointly concave, then either
$0<p,q\le1$ and $0<s\le1/(p+q)$, or $-1\le p,q<0$ and
$1/(p+q)\le s<0$.
\end{itemize}
\end{prop}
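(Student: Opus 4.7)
The plan is to combine a perturbative computation on $\bP_2$ with a reduction of (2) to (1).

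\medskip\noindent\textbf{For (1).} Taking first $X=I$, the function becomes $A\mapsto\Tr A^{ps}$ on $\bP_2$; restricting to commuting diagonal $A$ shows that $a\mapsto a^{ps}$ must be concave on $(0,\infty)$, which forces $ps\in[0,1]$, hence $0<ps\le1$ and $p,s$ share the same sign.

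The main step is to take $X=\mathrm{diag}(1,r)$ with $r\in(0,1)$ and to perturb $A=A(t)=\begin{pmatrix}1&t\\t&1\end{pmatrix}$. Since $A(t)$ is diagonalized by the Hadamard matrix, $A(t)^p=\begin{pmatrix}u&v\\v&u\end{pmatrix}$ with $u=\tfrac12[(1+t)^p+(1-t)^p]$, $v=\tfrac12[(1+t)^p-(1-t)^p]$, so that $X^*A(t)^pX=\begin{pmatrix}u&vr\\vr&r^2u\end{pmatrix}$ has closed-form eigenvalues
\[
\lambda_{\pm}=\frac{u(1+r^2)\pm\sqrt{u^2(1-r^2)^2+4r^2v^2}}{2}.
\]
A routine Taylor expansion of $\Tr(X^*A(t)^pX)^s=\lambda_+^s+\lambda_-^s$ to order $t^2$ produces the coefficient
\[
\frac{sp}{2(1-r^2)}\Big\{(p-1)(1-r^{2s+2})+(p+1)r^2(1-r^{2s-2})\Big\}.
\]
Concavity forces this quantity to be $\le0$; since $sp>0$ and $1-r^2>0$, the bracket itself must be $\le0$ for every $r\in(0,1)$.

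Letting $r\searrow0$ isolates the sign regime. If $s,p>0$, both $r^{2s+2}$ and $r^{2s}$ tend to $0$, the bracket tends to $p-1$, and one concludes $p\le1$; together with $0<ps\le1$ this yields $0<p\le1$, $0<s\le1/p$. If $s,p<0$, the bound $ps\le1$ gives $s\ge1/p$, so $s>-1$ whenever $p<-1$; then $2s+2>0$, the first summand tends to the finite value $p-1<0$, while the second summand $(p+1)(r^2-r^{2s})$ blows up, to $+\infty$ exactly when $p+1<0$. Hence $p\ge-1$ is necessary, and combined with $ps\le1$ one obtains $-1\le p<0$, $1/p\le s<0$.

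\medskip\noindent\textbf{For (2).} The plan is to reduce to (1). Joint concavity gives, for each fixed $B\in\bP_2$, concavity of $A\mapsto\Tr(A^{p/2}B^qA^{p/2})^s$. Choose $B=(X^*X)^{1/q}$ for an arbitrary invertible $X\in\bM_2$ and use
\[
\Tr\bigl(A^{p/2}X^*XA^{p/2}\bigr)^s=\Tr\bigl(XA^pX^*\bigr)^s,
\]
valid because $|XA^{p/2}|^2$ and $|A^{p/2}X^*|^2$ share their spectrum. This reduces the hypothesis to that of part~(1) after relabelling $X\mapsto X^*$, pinning down $(p,s)$. The identity $\Tr(A^{p/2}B^qA^{p/2})^s=\Tr(B^{q/2}A^pB^{q/2})^s$ (both traces depend only on the spectrum of $A^pB^q$) permits swapping the roles of $(A,p)$ and $(B,q)$ to obtain the same conclusion for $(q,s)$; in particular $p,q$ share the sign of $s$ and $|p|,|q|\le1$. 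Finally, restricting to the diagonal $A=B$ gives concavity of $A\mapsto\Tr A^{(p+q)s}$ on $\bP_2$, hence $0<(p+q)s\le1$, sharpening the individual bound $s\le1/\max\{p,q\}$ from the two applications of~(1) into $s\le1/(p+q)$ in the positive regime and $s\ge1/(p+q)$ in the negative one.

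\medskip
The hardest step is the perturbative computation in Step~2 of~(1): organizing the Taylor expansion of $\lambda_+^s+\lambda_-^s$ so that the coefficient of $t^2$ takes the factored form above, and correctly identifying which of the competing asymptotic rates $r^{2s+2}$ versus $r^{2s}$ dominates as $r\searrow0$ in the delicate case $s<0$, where the two summands of the bracket can diverge with opposite signs and the balance is decided precisely by the sign of $p+1$.
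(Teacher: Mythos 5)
Your proposal is correct; I verified that the coefficient of $t^2$ in your expansion of $\lambda_+^s+\lambda_-^s$ does reduce to $\frac{sp}{2(1-r^2)}\{(p-1)(1-r^{2s+2})+(p+1)r^2(1-r^{2s-2})\}$, and your sign analysis as $r\searrow0$ (including the case $p<-1$, where $ps\le1$ forces $s>-1$ and hence keeps the first summand bounded while the second blows up to $+\infty$) is sound. The overall skeleton coincides with the paper's: part (2) is reduced to part (1) by fixing $B$ with $B^q=XX^*$ (the paper phrases this as taking the polar decomposition of $X$), and the bound $(p+q)s\le1$ comes from restriction to scalars in both treatments. The genuine difference lies in the second-order test used for part (1). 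The paper takes $X_\eps=\bigl[\begin{smallmatrix}1&0\\1&\eps\end{smallmatrix}\bigr]$ and lets $\eps\searrow0$, so the hypothesis degenerates to concavity of the commutative two-variable function $(a,b)\mapsto(a^p+b^p)^s$, whose one-variable second derivative $psx^{p-2}(x^p+b)^{s-2}\{(ps-1)x^p+(p-1)b\}$ yields $p\le1$ and $ps\le1$ at once by letting $b\to\infty$ and $b\to0$; the case $s<0$ is then dispatched by the identity $\Tr(X^*A^pX)^s=\Tr\bigl(X^{-1}A^{-p}(X^{-1})^*\bigr)^{-s}$ rather than by tracking competing rates in $r$. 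Your test keeps $X=\mathrm{diag}(1,r)$ invertible and perturbs $A$ off-diagonally, which costs a genuine $2\times2$ eigenvalue-perturbation computation and the delicate comparison of $r^{2s+2}$ against $r^{2s}$ when $s<0$, but extracts exactly the same necessary conditions; the degenerate commuting test is simply the computationally cheaper route to the same place.
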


\begin{proof}
(1)\enspace First assume that $s>0$. By assumption, $x^{ps}$ is concave in $x>0$ so that
$0<ps\le1$. For every $a,b,\eps>0$ let $A:=\begin{bmatrix}a&0\\0&b\end{bmatrix}$ and
$X_\eps:=\begin{bmatrix}1&0\\1&\eps\end{bmatrix}$; then
$\Tr(X_\eps^*A^pX_\eps)^s\to(a^p+b^p)^s$
as $\eps\searrow0$. So $(a^p+b^p)^s$ is concave in $a,b>0$. Since
\begin{equation}\label{F-4.1}
{d^2\over dx^2}\,(x^p+b)^s=psx^{p-2}(x^p+b)^{s-2}\{(ps-1)x^p+(p-1)b\},
\end{equation}
we must have $(ps-1)x^p+(p-1)b\le0$ for all $x,b>0$, which gives $p\le1$ as well as
$ps\le1$. When $s<0$, the result follows from the above case since
$\Tr(X^*A^pX)^s=\Tr(X^{-1}A^{-p}(X^{-1})^*)^{-s}$.

(2)\enspace As in the proof of (1) it suffices to assume that $s>0$. By assumption,
$x^{(p+q)s}$ is concave in $x>0$ so that $(p+q)s\le1$. The assumption also implies that
$A\in\bP_2\mapsto\Tr(X^*A^pX)^s$ is concave for every invertible $X\in\bM_2$, as readily
seen by taking the polar decomposition of $X$. Hence (1) implies that $0\le p\le1$.
Similarly, $0\le q\le1$.
\end{proof}

\begin{remark}\label{R-4.2}\rm
We have assumed in Proposition \ref{P-4.1}\,(2) that $p,q\ne0$ (stronger than
$(p,q)\ne(0,0)$). When $q=0$, the condition there means concavity of $\Tr A^{ps}$,
implying $0\le ps\le1$. However, for joint concavity of $\Tr(A^{p/2}XB^qX^*A^{p/2})^s$
for any invertible $X\in\bM_2$, we have the same necessary condition as in
Proposition \ref{P-4.1}\,(2) including the case $p=0$ or $q=0$.
\end{remark}

There is no gap between a necessary condition in Proposition \ref{P-4.1}\,(1) and a
sufficient condition in Theorem \ref{T-3.1}\,(1). This says that Theorem \ref{T-3.1}\,(1)
is a best possible result. The difference between a necessary condition in Proposition
\ref{P-4.1}\,(2) and a sufficient condition in Theorem \ref{T-1.1}\,(1) is rather small:
$0<s<1/2$ for $0<p,q\le1$, or $-1/2<s<0$ for $-1\le p,q<0$.
When restricted to $s=1$, a necessary condition for joint concavity of
$(A,B)\in\bP_2\times\bP_2\mapsto\Tr A^pB^q$ is that $0\le p,q\le1$ and $p+q\le1$, which is
also sufficient for joint concavity of \eqref{F-1.2} for any $X\in\bM_n$ (as shown in
\cite{Li,An}) and even for \eqref{F-1.1} with $s=1$.

\begin{remark}\label{R-4.3}\rm
For the case in the gap between conditions of Proposition \ref{P-4.1}\,(2) and of Theorem
\ref{T-1.1}\,(1), the following is worth noting: Assume that $0<p,q\le1$ and $0<s\le1$.
For every positive linear maps $\Phi:\bM_n\to\bM_l$, $\Psi:\bM_m\to\bM_l$ and for every
$A_1,A_2\in\bM_n^+$, $B_1,B_2\in\bM_m^+$ one has
\begin{align*}
&\biggl\{\Phi\biggl(\biggl({A_1+A_2\over2}\biggr)^p\biggr)^{1/2}
\Psi\biggl(\biggl({B_1+B_2\over2}\biggr)^q\biggr)
\Phi\biggl(\biggl({A_1+A_2\over2}\biggr)^p\biggr)^{1/2}\biggr\}^s \\
&\qquad\ge U\biggl\{\biggl({\Phi(A_1^q)+\Phi(A_2^q)\over2}\biggr)^{1/2}
\biggl({\Psi(B_1^p)+\Psi(B_2^p)\over2}\biggr)
\biggl({\Phi(A_1^q)+\Phi(A_2^q)\over2}\biggr)^{1/2}\biggr\}^sU^*
\end{align*}
for some unitary $U\in\bM_l$. Hence, to settle the case $0<s<1/2$ (and $0\le p,q\le1$) of
Theorem \ref{T-1.1}\,(1), we need to prove that
$(A,B)\in\bP_n^+\times\bP_n^+\mapsto\Tr(A^{1/2}BA^{1/2})^s$ is jointly concave if
$0<s<1/2$.
\end{remark}

Concerning the necessity direction for (joint) convexity of \eqref{F-1.1} and \eqref{F-3.1}
we have

\begin{prop}\label{P-4.4}
\begin{itemize}
\item[\rm(1)] Assume that $p,s\ne0$. If $A\in\bP_4\mapsto\Tr(X^*A^pX)^s$ is convex for
every $X\in\bP_4$, then one of the following four conditions is satisfied:
$$
\begin{cases}
-1\le p<0\ \mbox{and}\ s>0, \\
1\le p\le2\ \mbox{and}\ s\ge1/p,
\end{cases}
$$
and their counterparts where $(p,s)$ is replaced with $(-p,-s)$.
\item[\rm(2)] Assume that $p$, $q$ and $s$ are all non-zero. If
$(A,B)\in\bP_4\times\bP_4\mapsto\Tr(A^{p/2}B^qA^{p/2})^s$ is jointly convex, then one of
the following six conditions is satisfied:
$$
\begin{cases}
-1\le p,q<0\ \mbox{and}\ s>0, \\
-1\le p<0,\ 1\le q\le2,\ p+q>0\ \mbox{and}\ s\ge1/(p+q), \\
1\le p\le2,\ -1\le q<0,\ p+q>0\ \mbox{and}\ s\ge1/(p+q),
\end{cases}
$$
and their counterparts where $(p,q,s)$ is replaced with $(-p,-q,-s)$.
\end{itemize}
\end{prop}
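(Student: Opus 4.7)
Plan: Combine the scalar reduction of Proposition \ref{P-4.1} with a rank-one projection limit on $X\in\bP_4$ and, for (2), an $A\leftrightarrow B$ symmetry plus a scalar Hessian test.

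For (1), I first reduce to $s>0$ via the identity
\[
\Tr(X^*A^pX)^s=\Tr\bigl(X^{-1}A^{-p}(X^{-1})^*\bigr)^{-s},
\]
which preserves $\bP_4$ since $X\in\bP_4\Rightarrow X^{-1}\in\bP_4$; thus convexity for $(p,s)$ is equivalent to that for $(-p,-s)$, and one may assume $s>0$. The scalar test $X=I_4$, $A=aI_4$ gives $ps(ps-1)\ge0$, i.e., $ps\le0$ or $ps\ge1$. Next, for the structural bound $p\in[-1,0]\cup[1,2]$, I use a rank-one projection limit: take $X_\eps=|\psi\rangle\langle\psi|+\eps I_4\in\bP_4$ and let $\eps\searrow0$. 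A perturbation computation shows that the dominant eigenvalue of $X_\eps A^pX_\eps$ tends to $\langle\psi|A^p|\psi\rangle$ while the other three are $O(\eps^2)$; since $s>0$,
\[
\Tr(X_\eps A^pX_\eps)^s\longrightarrow\langle\psi|A^p|\psi\rangle^s,
\]
so $A\in\bP_4\mapsto\langle\psi|A^p|\psi\rangle^s$ is convex for every unit $\psi\in\bC^4$.

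Now set $g(t)=\langle\psi|(A+tH)^p|\psi\rangle>0$. Convexity of $g^s$ at $t=0$ reads $g(0)g''(0)+(s-1)g'(0)^2\ge0$; restricting $H$ to the hyperplane $\{L(H):=\langle\psi|D(A^p)[H]|\psi\rangle=0\}$ reduces this to $\langle\psi|D^2(A^p)[H,H]|\psi\rangle\ge0$. Running over all $\psi$ and all admissible $H$ forces operator convexity of $x\mapsto x^p$ on $\bP_4$, which by the L\"owner--Heinz criterion holds iff $p\in[-1,0]\cup[1,2]$. Combined with the scalar condition $ps(ps-1)\ge0$, this yields exactly the two cases of (1): $-1\le p<0,\,s>0$, or $1\le p\le2,\,s\ge1/p$.

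Part (2) reduces to (1) plus a scalar joint-convexity test. By cyclicity,
\[
\Tr(A^{p/2}B^qA^{p/2})^s=\Tr(X^*A^pX)^s\quad\text{with }X=B^{q/2}\in\bP_4,
\]
and symmetrically with $Y=A^{p/2}\in\bP_4$ in the other variable. Joint convexity therefore forces both $(p,s)$ and $(q,s)$ into one of the two classes of (1). For $s>0$ four combinations remain. Taking $A=aI_4,B=bI_4$ gives scalar joint convexity of $(a,b)\mapsto a^{ps}b^{qs}$, whose Hessian determinant is $ps\cdot qs\cdot(1-(p+q)s)$ times a positive factor. Combined with $ps(ps-1)\ge0$ and $qs(qs-1)\ge0$, the case $ps,qs\ge1$ is excluded (then $(p+q)s\ge2>1$), and in the mixed case ($ps<0$, $qs\ge1$ or swap) the determinant condition forces $(p+q)s\ge1$, hence $p+q>0$ and $s\ge1/(p+q)$. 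Matching these with the (1)-bounds on each variable individually delivers the three asserted cases.

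The main obstacle is the final step of the rank-one argument: verifying that the hyperplane $L(H)=0$ still captures an operator-convexity failure direction when $p\notin[-1,0]\cup[1,2]$. This is a short transversality observation---the open cone $\{Q(H):=\langle\psi|D^2(A^p)[H,H]|\psi\rangle<0\}$ must intersect $\ker L$ for a suitable $(A,\psi)$---but it is not entirely automatic; for $p\in(0,1)$ already the test point $A=I_4$ works since $Q(H)=p(p-1)\langle\psi|H^2|\psi\rangle$, whereas for $p<-1$ or $p>2$ the failure point lies away from $A=I_4$ and one needs to move $(A,\psi)$ to a generic position. The remaining ingredients---the rank-one eigenvalue limit and the scalar Hessian calculation---are essentially bookkeeping.
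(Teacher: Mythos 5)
Your part (2) is essentially the paper's argument: reduce to (1) in each variable via $\Tr(A^{p/2}B^qA^{p/2})^s=\Tr(B^{q/2}A^pB^{q/2})^s$, then run the scalar Hessian test on $a^{ps}b^{qs}$ (this is exactly inequality \eqref{F-4.2} of the paper) and the same case analysis. The real issue is part (1), where your route genuinely diverges from the paper's and contains the gap you yourself flag --- and that gap is not a removable technicality but the heart of the matter. From convexity of $A\mapsto\langle\psi|A^p|\psi\rangle^s$ you can only conclude $\langle\psi|D^2(A^p)[H,H]|\psi\rangle\ge0$ on the hyperplane $\{L(H)=0\}$, and the negative cone of a quadratic form $Q$ can perfectly well avoid a fixed hyperplane (e.g.\ $Q=-L^2+(\mathrm{psd})$ is nonnegative on $\ker L$ yet not positive semidefinite), so ``running over all $\psi$ and $H$'' does not automatically yield matrix convexity of $x^p$. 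Your own check at $A=I_4$ only disposes of $0<p<1$; the cases $p>2$ and $p<-1$, which are precisely what must be excluded to reach the stated conclusion, are left to an unproven ``generic position'' claim. As written, part (1) --- and hence part (2), which quotes its conclusion --- is incomplete.

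The paper avoids this entirely by using a different weight. Instead of compressing by a rank-one projection, it takes $X_\eps=\begin{bmatrix}I_2&0\\ I_2&\eps I_2\end{bmatrix}$ acting on $\mathrm{diag}(A,B)$ to deduce joint convexity of $(A,B)\mapsto\Tr(A^p+B^p)^s$ on $\bP_2\times\bP_2$, and then applies Bekjan's linearization \cite{Be,CL2}: expanding $\ffi_t(A)=\Tr(tA^p+B)^s=\Tr B^s+st\Tr B^{s-1}A^p+o(t)$ as $t\searrow0$ and comparing with convexity gives
$$
\Tr B^{s-1}\biggl({A_1+A_2\over2}\biggr)^p\le\Tr B^{s-1}\biggl({A_1^p+A_2^p\over2}\biggr)
$$
for \emph{every} $B\in\bP_2$. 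Since $B^{s-1}$ sweeps all of $\bP_2$, the operator inequality $((A_1+A_2)/2)^p\le(A_1^p+A_2^p)/2$ follows with no directional restriction, i.e.\ $x^p$ is matrix convex of order $2$, and \cite[Proposition 3.1]{HT} then gives $p\in[-1,0]\cup[1,2]$; the condition $ps\ge1$ for $1\le p\le2$ comes from scalar convexity as in your argument. The moral is that the auxiliary variable $B$ supplies a full-rank, freely chosen weight $B^{s-1}$, whereas your $|\psi\rangle\langle\psi|$ is rank one and tied to the constraint $L(H)=0$. To salvage your approach you would have to actually construct $(A,\psi,H)$ with $L(H)=0$ and $Q(H)<0$ for every $p<-1$ and $p>2$; otherwise, switch to the two-variable linearization.
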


\begin{proof}
As in the proof of Proposition \ref{P-4.1} it suffices to assume that $s>0$.

(1)\enspace Let $X_\eps:=\begin{bmatrix}I_2&0\\I_2&\eps I_2\end{bmatrix}\in\bM_4$ for
$\eps>0$. For any $A,B\in\bP_2$, since
$$
\Tr\biggl(X_\eps^*\begin{bmatrix}A&0\\0&B\end{bmatrix}^pX_\eps\biggr)^s
\longrightarrow\Tr(A^p+B^p)^s\quad\mbox{as $\eps\searrow0$},
$$
the assumption implies that $(A,B)\in\bP_2\times\bP_2\mapsto\Tr(A^p+B^p)^s$ is jointly
convex, so $\ffi_t(A):=\Tr(tA^p+B)^s$ is convex in $A\in\bP_2$ for any $t>0$ and
$B\in\bP_2$. Now, the argument below is the same as that in \cite{Be} (also \cite{CL2})
while it is given for completeness. Since
$$
{d\over dt}\,\ffi_t(A)\Big|_{t=0}=s\Tr B^{s-1}A^p,
$$
we notice that
$$
\ffi_t(A)=\Tr B^s+st\Tr B^{s-1}A^p+o(t)\quad\mbox{as $t\searrow0$}.
$$
Therefore, for $A_1,A_2\in\bP_2$ we have
\begin{align*}
0&\ge\ffi_t\biggl({A_1+A_2\over2}\biggr)-{\ffi_r(A_1)+\ffi_r(A_2)\over2} \\
&=st\biggl\{\Tr B^{s-1}\biggl({A_1+A_2\over2}\biggr)^p
-\Tr B^{s-1}\biggl({A_1^p+A_2^p\over2}\biggr)\biggr\}+o(t)
\quad\mbox{as $t\searrow0$}
\end{align*}
so that
$$
\Tr B^{s-1}\biggl({A_1+A_2\over2}\biggr)^p\le\Tr B^{s-1}\biggl({A_1^p+A_2^p\over2}\biggr).
$$
When $s\ne1$, this means that $x^p$ ($x>0$) is matrix convex of order $2$, which is also
clear for $s=1$ from the assumption itself. Hence by \cite[Proposition 3.1]{HT} we must
have $-1\le p\le0$ or $1\le p\le2$. When $1\le p\le2$, $ps\ge1$ since $x^{ps}$ is convex.

(2)\enspace Since the assumption here implies that of (1) thanks to $p,q\ne0$,
it follows that either $-1\le p<0$, or $1\le p\le2$ and $s\ge1/p$. Similarly,
either $-1\le q<0$, or $1\le q\le2$ and $s\ge1/q$. Since $x^{ps}y^{qs}$ is
jointly convex in $x,y>0$, computing the Hessian gives
\begin{equation}\label{F-4.2}
pq\{1-(p+q)s\}\ge0.
\end{equation}
Hence $ps\ge1$ and $qs\ge1$ cannot occur simultaneously, so the following three cases are
possible (when $s>0$):
$$
\begin{cases}
-1\le p,q<0, \\
-1\le p<0$\ \mbox{and}\ $1\le q\le2, \\
1\le p\le2$\ \mbox{and}\ $-1\le q<0.
\end{cases}
$$
For the above second case, \eqref{F-4.2} gives $p+q>0$ and $s\ge1/(p+q)$. The third case
is similar.
\end{proof}

A gap between a necessary condition in Proposition \ref{P-4.4}\,(1) and a sufficient
condition in \eqref{F-3.3} and Theorem \ref{T-3.2} together is not so big: only the case
$-2\le p\le-1$ and $s\le1/p$. But there is quite a big gap between conditions in
Proposition \ref{P-4.4}\,(2) and in Theorem \ref{T-1.1}\,(2). Concerning the
assumption $p,q\ne0$ in Proposition \ref{P-4.4}\,(2) a remark similar to Remark \ref{R-4.2}
is available. When restricted to $s=1$ (and $p,q\ne0$),
Proposition \ref{P-4.4}\,(2) says that a necessary condition for joint convexity of
$(A,B)\in\bP_4\times\bP_4\mapsto\Tr A^pB^q$ is that
$$
\begin{cases}
-1\le p,q<0, \\
-1\le p<0\ \mbox{and}\ 1-p\le q\le2, \\
-1\le q<0\ \mbox{and}\ 1-q\le p\le2,
\end{cases}
$$
which is exactly a necessary and sufficient condition for joint convexity of \eqref{F-1.2}
for any $X\in\bM_n$ (see \cite[p.\ 221, Remark (4)]{An}). In this connection see also
\cite[Theorem 2]{Be} and \cite[Lemma 5.2]{CL2}.

\section{More discussions}

Theorem \ref{T-1.1} was presented for trace functions while we more generally treated
symmetric (anti-) norm functions in Theorems \ref{T-2.2} and \ref{T-3.1}. So it is
desirable to extend Theorem \ref{T-1.1} to joint concavity/convexity of (anti-) norm
functions. The problem can be reduced to joint concavity of the Ky Fan $k$-anti-norm
functions
$$
(A,B)\in\bP_n\times\bP_m\longmapsto
\big\|\bigl\{\Phi(A^p)^{1/2}\Psi(B^q)\Phi(A^p)^{1/2}\bigr\}^s\big\|_{\{k\}}
$$
and joint convexity of the Ky Fan $k$-norm functions
$$
(A,B)\in\bP_n\times\bP_m\longmapsto
\big\|\bigl\{\Phi(A^p)^{1/2}\Psi(B^q)\Phi(A^p)^{1/2}\bigr\}^s\big\|_{(k)}
$$
for $k=1,\dots,l$ in the situation of Theorem \ref{T-1.1}. In this section we examine the
problem in the special case $k=1$. As in Section 1 we assume that $(p,q)\ne(0,0)$ and
$s\ne0$.

\begin{thm}\label{T-5.1}
Let $\Phi:\bM_n\to\bM_l$ and $\Psi:\bM_m\to\bM_l$ be as in Theorem \ref{T-1.1}.
\begin{itemize}
\item[\rm(1)] The function
$$
(A,B)\in\bP_n\times\bP_m\longmapsto
\lambda_l\bigl(\bigl\{\Phi(A^p)^{1/2}\Psi(B^q)\Phi(A^p)^{1/2}\bigr\}^s\bigr)
$$
is jointly concave, where $\lambda_l(C)=\lambda_l^\downarrow(C)$ is the smallest eigenvalue
of $C\in\bP_l$, if one of the following two conditions is satisfied:
\begin{equation}\label{F-5.1}
\begin{cases}
0\le p,q\le1\ \mbox{and}\ 0<s\le1/(p+q), \\
-1\le p,q\le0\ \mbox{and}\ 1/(p+q)\le s<0.
\end{cases}
\end{equation}
\item[\rm(2)] The function
$$
(A,B)\in\bP_n\times\bP_m\longmapsto
\big\|\bigl\{\Phi(A^p)^{1/2}\Psi(B^q)\Phi(A^p)^{1/2}\bigr\}^s\big\|_\infty,
$$
where $\|\cdot\|_\infty$ is the operator norm, is jointly convex if one of the following
six conditions is satisfied:
\begin{equation}\label{F-5.2}
\begin{cases}
-1\le p,q\le0\ \mbox{and}\ s>0, \\
-1\le p\le0,\ 1\le q\le2,\ p+q>0\ \mbox{and}\ s\ge1/(p+q), \\
1\le p\le2,\ -1\le q\le0,\ p+q>0\ \mbox{and}\ s\ge1/(p+q),
\end{cases}
\end{equation}
and their counterparts where $(p,q,s)$ is replaced with $(-p,-q,-s)$.
\end{itemize}
\end{thm}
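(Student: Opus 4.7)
The plan is to adapt the Epstein-method framework of Theorem~\ref{T-1.1} by replacing the scalar trace $\Tr F(z)^s$ with the vector expectations $\ffi_v(z):=\langle v,F(z)^s v\rangle$ for each unit $v\in\bC^l$, exploiting the variational formulas $\lambda_l(C)=\inf_{\|v\|=1}\langle v,Cv\rangle$ and $\|C\|_\infty=\sup_{\|v\|=1}\langle v,Cv\rangle$ for $C\in\bP_l$. If for each $v$ the scalar analytic function $\ffi_v:\bC^+\to\bC$ turns out to be a Pick function, then Nevanlinna's representation combined with the computation in Theorem~\ref{T-1.1} gives concavity of
$$
x\,\ffi_v(x^{-1})=\langle v,\bigl\{\Phi((A+xH)^p)^{1/2}\Psi((B+xK)^q)\Phi((A+xH)^p)^{1/2}\bigr\}^s v\rangle
$$
on $(0,R^{-1})$; taking $\inf_v$ (resp.\ $\sup_v$) over unit $v$ then yields the desired directional concavity of $\lambda_l(F(A,B)^s)$ (resp.\ convexity of $\|F(A,B)^s\|_\infty$).

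For Part~(1), the first case $0\le p,q\le1$ and $0<s\le1/(p+q)$ reduces immediately to the edge $s_0=1/(p+q)$: since $\lambda_l(F^s)=\lambda_l(F)^s$ for $s>0$ and $t\mapsto t^{s/s_0}$ is concave increasing on $(0,\infty)$ for $0<s/s_0\le1$, joint concavity at the edge propagates to all smaller $s$. The second case $-1\le p,q\le0$ reduces to the first via the $\hat\Phi,\hat\Psi$ substitution already used in Theorem~\ref{T-1.1}, so case~(1) hinges on the edge $\gamma s=1$ with $0<\gamma=p+q\le2$, where Theorem~\ref{T-1.1} already supplies $\sigma(F(z)^s)\subset\bC^+$. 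For Part~(2) the analogous blueprint targets $\|F^s\|_\infty$, with Pick candidates $-\ffi_v$ or $\ffi_v^{-1}$ in cases where $\sigma(F(z)^s)\subset\bC^-$; the six cases in~\eqref{F-5.2} correspond to distinct sector placements. The mixed cases---one parameter in $[-1,0]$, the other in $[1,2]$---require extending the sector analysis beyond $[0,1]$, using that for $p\in(1,2]$ the matrix $X(z)^p$ defined via analytic functional calculus has spectrum in the larger sector $\Gamma_{p\pi}$, even though the operator positivity $\Im X(z)^p>0$ no longer holds.

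The main obstacle is verifying that $\ffi_v$ takes values in $\bC^+$ (or $\bC^-$, as appropriate), which is strictly stronger than the spectral containment used for the trace: a matrix may have all its eigenvalues in $\bC^+$ while its imaginary part fails to be positive-definite as an operator. I expect this step will require either a direct computation exploiting the specific sandwich structure $\Phi(A^p)^{1/2}\Psi(B^q)\Phi(A^p)^{1/2}$, or a restriction to the optimal vector $v$ (the eigenvector realizing $\lambda_l$ or $\|\cdot\|_\infty$) via an envelope-type argument so that the Pick property of $\ffi_v$ need only hold at the optimizer rather than uniformly in $v$. Notably, establishing $\Im F(z)^s>0$ uniformly would force operator-concavity of $xF(x^{-1})^s$ on $(0,R^{-1})$, giving concavity under \emph{every} symmetric anti-norm and contradicting the paper's remark that the extension beyond $k=1$ is elusive; any successful argument therefore must exploit features specific to the rank-one character of the $k=1$ Ky~Fan functionals rather than the full operator-valued Pick property.
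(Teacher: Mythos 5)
Your proposal stalls exactly at the step you yourself flag as the ``main obstacle,'' and that obstacle is not surmountable along the route you sketch. For $\ffi_v(z)=\langle v,F(z)^sv\rangle$ to be a Pick function for every unit vector $v$ you would need $\Im F(z)^s\ge0$ as an \emph{operator}, not merely $\sigma(F(z)^s)\subset\bC^+$; as you correctly observe, the operator statement is not established by the sector analysis of Theorem \ref{T-1.1}, and if it were true it would yield concavity under every symmetric anti-norm, contradicting the difficulty the paper explicitly records. Your fallback --- an ``envelope-type argument'' requiring the Pick property only at the optimizing vector --- does not repair this: the optimizer of $\langle v,G(x)v\rangle$ varies with the real parameter $x$, so concavity of $\inf_v\langle v,G(x)v\rangle$ (resp.\ convexity of the supremum) cannot be deduced from information about a single $v$; one genuinely needs each branch $x\mapsto\langle v,G(x)v\rangle$ to be concave, which is precisely the unproved operator-Pick property. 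The reduction to the edge case $s=1/(p+q)$ and the $\hat\Phi,\hat\Psi$ substitution are fine, but the heart of the theorem is left unproved.

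The paper takes an entirely different and elementary route that avoids complex analysis altogether. At the edge $s=1/(p+q)$ it sets $\alpha_j:=\lambda_l\bigl(\{\Phi(A_j^p)^{1/2}\Psi(B_j^q)\Phi(A_j^p)^{1/2}\}^{1/(p+q)}\bigr)$, rewrites the defining inequality as $\Psi((\alpha_j^{-1}B_j)^q)\ge\Phi((\alpha_j^{-1}A_j)^p)^{-1}$, and then chains three operator inequalities --- operator concavity of $x^q$, operator convexity of $x^{-1}$ applied to the convex combination with weights $\alpha_j/(\alpha_1+\alpha_2)$, and operator concavity of $x^p$ --- to obtain
$$
\Psi\biggl(\biggl(\frac{B_1+B_2}{\alpha_1+\alpha_2}\biggr)^q\biggr)
\ge\Phi\biggl(\biggl(\frac{A_1+A_2}{\alpha_1+\alpha_2}\biggr)^p\biggr)^{-1},
$$
hence $F\bigl(\tfrac{A_1+A_2}{2},\tfrac{B_1+B_2}{2}\bigr)\ge\bigl(\tfrac{\alpha_1+\alpha_2}{2}\bigr)^{p+q}I$. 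This exploits precisely the rank-one character you anticipated, but through the equivalences $\lambda_l(C)\ge\alpha\Leftrightarrow C\ge\alpha I$ and $\|C\|_\infty\le\alpha\Leftrightarrow C\le\alpha I$ rather than through vector states; the negative-power and mixed-sign cases in \eqref{F-5.2} are handled by the same chain with $\hat\Phi,\hat\Psi$ and Lemma \ref{L-2.4}. If you want to complete your write-up, you should abandon the Pick-function machinery for this theorem and substitute this order-theoretic argument.
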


\begin{proof}
(1)\enspace
First, assume that $0\le p,q\le1$ and $s=1/(p+q)$. We show that
\begin{align}\label{F-5.3}
&\lambda_l\biggl(\biggl\{\Phi\biggl(\biggl({A_1+A_2\over2}\biggr)^p\biggr)^{1/2}
\Psi\biggl(\biggl({B_1+B_2\over2}\biggr)^q\biggr)
\Phi\biggl(\biggl({A_1+A_2\over2}\biggr)^{p/2}\biggr)\biggr\}^s\biggr) \nonumber\\
&\qquad\ge{
\lambda_l\bigl(\{\Phi(A_1^p)^{1/2}\Psi(B_1^q)\Phi(A_1^p)^{1/2}\}^s\bigr)
+\lambda_l\bigl(\{\Phi(A_2^p)^{1/2}\Psi(B_2^q)\Phi(A_2^p)^{1/2}\}^s\bigr)
\over2}
\end{align}
for every $A_1,A_2\in\bP_n$ and $B_1,B_2\in\bP_m$. Set
$$
\alpha_j:=\lambda_l\bigl(\{\Phi(A_j^p)^{1/2}\Psi(B_j^q)\Phi(A_j^p)^{1/2}\}
^{1/(p+q)}\bigr),\qquad j=1,2.
$$
We then have $\Phi(A_j^p)^{1/2}\Psi(B_j^q)\Phi(A_j^p)^{1/2}\ge\alpha_j^{p+q}I$ so that
\begin{equation}\label{F-5.4}
\Psi((\alpha_j^{-1}B_j)^q)\ge\Phi((\alpha_j^{-1}A_j)^p)^{-1},\qquad j=1,2.
\end{equation}
Since $x^q$ is operator concave, we have
$$
\biggl({B_1+B_2\over\alpha_1+\alpha_2}\biggr)^q
\ge{\alpha_1\over\alpha_1+\alpha_2}(\alpha_1^{-1}B_1)^q
+{\alpha_2\over\alpha_1+\alpha_2}(\alpha_2^{-1}B_2)^q,
$$
which implies that
\begin{align}
\Psi\biggl(\biggl({B_1+B_2\over\alpha_1+\alpha_2}\biggr)^q\biggr)
&\ge{\alpha_1\over\alpha_1+\alpha_2}\Phi((\alpha_1^{-1}A_1)^p)^{-1}
+{\alpha_2\over\alpha_1+\alpha_2}\Phi((\alpha_2^{-1}A_2)^p)^{-1} \nonumber\\
&\ge\biggl\{\Phi\biggl({\alpha_1\over\alpha_1+\alpha_2}(\alpha_1^{-1}A_1)^p
+{\alpha_2\over\alpha_1+\alpha_2}(\alpha_2^{-1}A_2)^p\biggr)\biggr\}^{-1} \label{F-5.5}
\end{align}
due to \eqref{F-5.4} and operator convexity of $x^{-1}$. Moreover, operator concavity
of $x^p$ gives
$$
{\alpha_1\over\alpha_1+\alpha_2}(\alpha_1^{-1}A_1)^p
+{\alpha_2\over\alpha_1+\alpha_2}(\alpha_2^{-1}A_2)^p
\le\biggl({A_1+A_2\over\alpha_1+\alpha_2}\biggr)^p.
$$
Inserting this into \eqref{F-5.5} we have
\begin{equation}\label{F-5.6}
\Psi\biggl(\biggl({B_1+B_2\over\alpha_1+\alpha_2}\biggr)^q\biggr)
\ge\Phi\biggl(\biggl({A_1+A_2\over\alpha_1+\alpha_2}\biggr)^p\biggr)^{-1}
\end{equation}
so that
$$
\Psi\biggl(\biggl({B_1+B_2\over2}\biggr)^q\biggr)
\ge\biggl({\alpha_1+\alpha_2\over2}\biggr)^{p+q}
\Phi\biggl(\biggl({A_1+A_2\over2}\biggr)^p\biggr)^{-1}.
$$
Therefore,
$$
\Phi\biggl(\biggl({A_1+A_2\over2}\biggr)^p\biggr)^{1/2}
\Psi\biggl(\biggl({B_1+B_2\over2}\biggr)^q\biggr)
\Phi\biggl(\biggl({A_1+A_2\over2}\biggr)^p\biggr)^{1/2}
\ge\biggl({\alpha_1+\alpha_2\over2}\biggr)^{p+q}I,
$$
which implies the assertion when $s=1/(p+q)$. The assertion when $0<s<1/(p+q)$ follows by
taking the $s(p+q)$-powers of both sides of \eqref{F-5.3} when $s=1/(p+q)$ and by applying
concavity of $x^{s(p+q)}$.

For the other case where $-1\le p,q\le0$ and $1/(p+q)\le s<0$, we may check (see
\eqref{F-1.10}) that the above proof for the first case can be performed with $\hat\Phi$
and $\hat\Psi$ in place of $\Phi$ and $\Psi$. A non-trivial part is to prove \eqref{F-5.6}
for $\hat\Phi$ and $\hat\Psi$ from \eqref{F-5.4} for $\hat\Phi$ and $\hat\Psi$, which can
be done by Lemma \ref{L-2.4} as follows:
\begin{align*}
\hat\Psi\biggl(\biggl({B_1+B_2\over\alpha_1+\alpha_2}\biggr)^q\biggr)
&\ge{\alpha_1\over\alpha_1+\alpha_2}\hat\Phi((\alpha_1^{-1}A_1)^p)^{-1}
+{\alpha_2\over\alpha_1+\alpha_2}\hat\Phi((\alpha_2^{-1}A_2)^p)^{-1} \\
&=\Phi\biggl({\alpha_1\over\alpha_1+\alpha_2}(\alpha_1^{-1}A_1)^{-p}
+{\alpha_2\over\alpha_1+\alpha_2}(\alpha_2^{-1}A_2)^{-p}\biggr) \\
&\ge\Phi\biggl(\biggl({A_1+A_2\over\alpha_1+\alpha_2}\biggr)^{-p}\biggr)
=\hat\Phi\biggl(\biggl({A_1+A_2\over\alpha_1+\alpha_2}\biggr)^p\biggr)^{-1}.
\end{align*}

(2)\enspace
Assume that $s>0$. When $-1\le p,q\le0$ and $0<s\le-1/(p+q)$, since
$$
\big\|\bigl\{\Phi(A^p)^{1/2}\Psi(B^q)\Phi(A^p)^{1/2}\bigr\}^s\big\|_\infty
=\lambda_l\bigl(\bigl\{\Phi(A^p)^{1/2}\Psi(B^q)\Phi(A^p)^{1/2}\bigr\}^{-s}\bigr)^{-1},
$$
the assertion is immediate from (1) above. When $-1\le p,q\le0$ and $s>-1/(p+q)$, since
$-s(p+q)>1$ and
$$
\big\|\bigl\{\Phi(A^p)^{1/2}\Psi(B^q)\Phi(A^p)^{1/2}\bigr\}^s\big\|_\infty
=\big\|\bigl\{\Phi(A^p)^{1/2}\Psi(B^q)\Phi(A^p)^{1/2}\bigr\}^{-1/(p+q)}
\big\|_\infty^{-s(p+q)},
$$
the assertion follows from the case $s=-1/(p+q)$. Next, assume that $(p,q,s)$ satisfies
the second condition in \eqref{F-5.2}. As in the above argument it suffices to show the
joint convexity assertion when $s=1/(p+q)$. Set
$$
\alpha_j:=\big\|\bigl\{\Phi(A_j^p)^{1/2}\Psi(B_j^q)\Phi(A_j^p)^{1/2}\bigr\}
^{1/(p+q)}\big\|_\infty,\qquad j=1,2.
$$
Then $\Phi(A_j^p)^{1/2}\Psi(B_j^q)\Phi(A_j^p)^{1/2}\le\alpha_j^{p+q}I$, and by the same
argument as in the proof of (1) with use of Lemma \ref{L-2.4} we have
$$
\Phi\bigg(\bigg({A_1+A_2\over2}\biggr)^p\biggr)^{1/2}
\Psi\biggl(\bigg({B_1+B_2\over2}\biggr)^q\biggr)
\Phi\bigg(\bigg({A_1+A_2\over2}\biggr)^p\biggr)^{1/2}
\le\biggl({\alpha_1+\alpha_2\over2}\biggr)^{p+q}I,
$$
which implies the desired joint convexity. Since
$$
\big\|\bigl\{\Phi(A^p)^{1/2}\Psi(B^q)\Phi(A^p)^{1/2}\bigr\}^s\big\|_\infty
=\big\|\bigl\{\Psi(B^q)^{1/2}\Phi(A^p)\Phi(B^q)^{1/2}\bigr\}^s\big\|_\infty,
$$
the assertion holds also when $(p,q,s)$ satisfies the third condition in \eqref{F-5.2}.

Finally, the above proof can be repeated with $\hat\Phi$ and $\hat\Psi$ in place of $\Phi$
and $\Psi$ (while we omit the details), which shows the assertion under the other three
conditions where $s<0$.
\end{proof}

By Theorems \ref{T-1.1} and \ref{T-5.1} we see that Theorem \ref{T-1.1} can be extended to
symmetric (anti-) norm functions in particular when $l=2$ (under the same assumption for
each of (1) and (2) of Theorem \ref{T-1.1}).

Although the next theorem is concerned with functions of the particular form with
$\Phi=\Psi=\id$ in Theorem \ref{T-5.1} under $p,q\ne0$ (stronger than $(p,q)\ne(0,0)$),
it has an advantage that the condition on the parameters is a necessary and sufficient
condition. The theorem indeed says that the conditions on $p$, $q$ and $s$ in
Theorem \ref{T-5.1} are best possible except the case where $p=0$ or $q=0$. When $q=0$
(and $p\ne0$), the concavity/convexity properties in Theorem \ref{T-5.1} are reduced to
concavity of $A\in\bP_n\mapsto\lambda_m(\Phi(A^p)^s)$ and convexity of
$A\in\bP_n\mapsto\|\Phi(A^p)^s\|_\infty$ for every strictly positive linear map
$\Phi:\bM_n\to\bM_m$, which are special cases of the concavity/convexity properties of
Theorem \ref{T-3.1}.

\begin{thm}\label{T-5.2}
Assume that $p,q,s\ne0$.
\begin{itemize}
\item[\rm(1)] The function
\begin{equation}\label{F-5.7}
(A,B)\in\bP_n\times\bP_n\longmapsto\lambda_n\bigl((A^{p/2}B^qA^{p/2})^s\bigr)
\end{equation}
is jointly concave for every $n\in\bN$ (or equivalently, for fixed $n=2$) if
and only if $p$, $q$ and $s$ satisfy one of the conditions in \eqref{F-5.1}.
\item[\rm(2)] The function
\begin{equation}\label{F-5.8}
(A,B)\in\bP_n\times\bP_n\longmapsto\big\|(A^{p/2}B^qA^{p/2})^s\big\|_\infty
\end{equation}
is jointly convex for every $n\in\bN$ (or equivalently, for fixed $n=2$) if
and only if $p$, $q$ and $s$ satisfy one of the conditions in \eqref{F-5.2} and their
counterparts for $(-p,-q,-s)$ in place of $(p,q,s)$.
\end{itemize}
\end{thm}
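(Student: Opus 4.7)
The sufficient directions of (1) and (2) are immediate from Theorem \ref{T-5.1}: specializing to $\Phi=\Psi=\mathrm{id}$ and $l=n$, part (1) of that theorem gives joint concavity of $\lambda_l((A^{p/2}B^qA^{p/2})^s)$ under \eqref{F-5.1}, and part (2) gives joint convexity of $\|(A^{p/2}B^qA^{p/2})^s\|_\infty$ under \eqref{F-5.2} and its $(-p,-q,-s)$ counterpart. So the substantive task is the converse implications, which I plan to establish via $2\times 2$ test matrices. Throughout I may restrict to $s>0$: the case $s<0$ follows from the identity $(A^{p/2}B^qA^{p/2})^s=(A^{-p/2}B^{-q}A^{-p/2})^{-s}$ together with the bijection $(A,B)\mapsto(A^{-1},B^{-1})$ of $\bP_2\times\bP_2$.

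For the necessity in (1), a commuting scalar test with $A=aI$, $B=bI$ reduces the function to $a^{ps}b^{qs}$, whose joint concavity on $(0,\infty)^2$ forces $0\le ps,qs\le 1$ and (for $pq>0$) $(p+q)s\le 1$ by the same Hessian analysis used in Proposition~\ref{P-4.1}. To upgrade $ps\le 1$ to the individual bound $p\le 1$ (and symmetrically $q\le 1$), I would exploit the cyclic-eigenvalue identity: for any invertible $X\in\bM_2$, taking $B=(X^*X)^{1/q}$ and using that $A^{p/2}X^*XA^{p/2}$ and $XA^pX^*$ share the same positive spectrum reduces joint concavity to concavity, in $A\in\bP_2$, of $\lambda_2((XA^pX^*)^s)$ for every invertible $X$. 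One then chooses the Epstein-type matrix $X_\varepsilon=\bigl(\begin{smallmatrix}1&0\\1&\varepsilon\end{smallmatrix}\bigr)$ together with $A=\mathrm{diag}(a,b)$, expands $\lambda_2(X_\varepsilon^*A^pX_\varepsilon)$ to leading order in $\varepsilon$, and extracts the boundary inequality identifying $p=1$. By the symmetry of the original function in $A$ and $B$ one obtains the constraint on $q$ in the same way.

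For the necessity in (2) with $s>0$, the strategy is parallel, with $\|\cdot\|_\infty$ replacing $\lambda_n$ and convexity replacing concavity. Diagonal reductions give the Hessian sign conditions on $(ps,qs,(p+q)s)$ that partition the three admissible regimes in \eqref{F-5.2}; the individual interval constraints $p\in[-1,0]\cup[1,2]$ (and analogously for $q$) then follow from the cyclic-eigenvalue identity combined with the matrix-convexity-of-order-$2$ criterion of Hansen--Tomiyama \cite[Proposition~3.1]{HT}, invoked exactly as in Proposition~\ref{P-4.4}. The sign compatibility of $p,q$ and the bound $s\ge 1/(p+q)$ in the two mixed-sign regimes come from demanding each entry of the scalar Hessian have the correct sign, as in \eqref{F-4.2}.

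The main obstacle I foresee is the sharp $\varepsilon\searrow 0$ analysis in the cyclic-reduction step. Unlike the trace case, where $\Tr(X_\varepsilon^*A^pX_\varepsilon)^s\to(a^p+b^p)^s$ so that a clean second-derivative test cleanly identifies $p\le 1$, the smallest eigenvalue $\lambda_2(X_\varepsilon^*A^pX_\varepsilon)$ is only of order $\varepsilon^2$ with leading coefficient a rational function of $a^p$ and $b^p$ (of the shape $a^pb^p/(a^p+b^p)$). Consequently the one-variable limiting inequality has to be set up with care before its Hessian can be used to locate the boundary value $p=1$ (and, in (2), $p=2$); this is where I expect the bulk of the technical effort of the proof to lie.
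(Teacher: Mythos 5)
Your sufficiency direction is exactly the paper's (specialize Theorem \ref{T-5.1} to $\Phi=\Psi=\id$), and your reduction to $s>0$ is fine provided you use only the identity $(A^{p/2}B^qA^{p/2})^s=(A^{-p/2}B^{-q}A^{-p/2})^{-s}$ — the substitution $(A,B)\mapsto(A^{-1},B^{-1})$ is not affine and cannot be used to transport concavity or convexity. The serious problem is your necessity argument for (1). The limiting function you identify, $\lambda_2(X_\eps^*A^pX_\eps)\sim\eps^2\,a^pb^p/(a^p+b^p)$, leads to the scalar test ``$(a^{-p}+b^{-p})^{-s}$ is jointly concave,'' and this test cannot detect $p\le1$: for any $p>0$ with $0<ps\le1$ one has $(a^{-p}+b^{-p})^{-s}=2^{-s}\bigl(M_{-p}(a,b)\bigr)^{ps}$, where $M_{-p}(a,b)=((a^{-p}+b^{-p})/2)^{-1/p}$ is a jointly concave power mean, so the function is a concave power of a concave function and is jointly concave for \emph{every} $p>0$. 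For instance $(p,q,s)=(2,1/2,2/5)$ passes all of your proposed scalar tests ($ps=4/5$, $qs=1/5$, $(p+q)s=1$, and the $\eps$-limit is concave) yet is excluded by the theorem. The paper avoids this entirely by dualizing: for $s>0$, $\lambda_n\bigl((A^{p/2}B^qA^{p/2})^s\bigr)^{-1}=\|(A^{-p/2}B^{-q}A^{-p/2})^s\|_\infty$, so concavity of the positive function \eqref{F-5.7} gives convexity of \eqref{F-5.8} for $(-p,-q,s)$; one then applies the necessity in (2) and discards all but the first regime using $ps,qs,(p+q)s\in(0,1]$, which forces $p,q>0$. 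This reduction of (1) to (2) is the missing idea.

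For (2) your route is genuinely different from the paper's. The paper extracts from the convexity hypothesis the matrix power-mean inequality $((A^{1/q}+B^{1/q})/2)^{q}\le((A^{-1/p}+B^{-1/p})/2)^{-p}$ (Lemma \ref{L-5.3}, obtained by observing that convexity of the sup-norm propagates the constraint $B_i^q\le A_i^{-p}$ to midpoints and then setting $A_i=B_i^{-q/p}$), and then invokes the complete characterization of such inequalities (Lemma \ref{L-5.4}), whose ``only if'' half is a nontrivial counterexample construction deferred to \cite{AuHi}; this works already at $n=2$, which is what justifies the ``or equivalently, for fixed $n=2$'' clause. Your Hansen--Tomiyama plan replaces the trace derivative $s\Tr B^{s-1}A^p$ of Proposition \ref{P-4.4} by a first-order perturbation of the top eigenvalue of $tA^p+B$, which tests $x^p$ only against the rank-one projection onto the top eigenvector of $B$; this is salvageable because that eigenvector is arbitrary as $B$ varies, but the point must be made explicitly, and the block-matrix reduction to $\|(A^p+B^p)^s\|_\infty$ forces you up to $n=4$, so at best you would prove necessity under the ``for every $n$'' quantifier and not the claimed equivalence with $n=2$. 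In short: (2) is plausibly repairable along your lines for the weaker quantifier, but (1) as proposed fails, and with it the theorem.
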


\begin{proof}
Obviously, the ``if\," parts of (1) and (2) are included in those of Theorem \ref{T-5.1}.
In the rest we prove the ``only if\," parts under $p,q,s\ne0$. As in the proofs of
Propositions \ref{P-4.1} and \ref{P-4.4} we may assume that $s>0$.

(1)\enspace
Since
$$
\bigl\{\lambda_n\bigl((A^{p/2}B^qA^{p/2})^s\bigr)\bigr\}^{-1}
=\big\|(A^{-p/2}B^{-q}A^{-p/2})^s\big\|_\infty,
$$
joint concavity of \eqref{F-5.7} implies joint convexity of \eqref{F-5.8} for
$(-p,-q,s)$. By applying the conclusion of (2) (proved below) to $(-p,-q,s)$ (with $s>0$)
we have
$$
\begin{cases}
0<p,q\le1, \\
0<p\le1,\ -2\le q\le-1,\ p+q<0\ \mbox{and}\ s\ge-1/(p+q), \\
-2\le p\le-1,\ 0<q\le1,\ p+q<0\ \mbox{and}\ s\ge-1/(p+q).
\end{cases}
$$
However, joint concavity of \eqref{F-5.7} implies that $ps,qs,(p+q)s\in(0,1]$ and
hence $p,q>0$. So the latter two cases in the above are impossible to appear, and (1) is
shown.

To prove (2), we first show

\begin{lemma}\label{L-5.3}
Let $n\in\bN$ and assume that $p,q\ne0$ and $s>0$. If the function \eqref{F-5.8} is
jointly convex, then
$$
\biggl({A^{1/q}+B^{1/q}\over2}\biggr)^q
\le\biggl({A^{-1/p}+B^{-1/p}\over2}\biggr)^{-p}
$$
for every $A,B\in\bP_n$.
\end{lemma}

\begin{proof}
The assumption means that
\begin{align*}
&\bigg\|\biggl\{\biggl({A_1+A_2\over2}\biggr)^{p/2}\biggl({B_1+B_2\over2}\biggr)^q
\biggl({A_1+A_2\over2}\biggr)^{p/2}\biggr\}^s\bigg\|_\infty \\
&\qquad\le{\big\|(A_1^{p/2}B_1^qA_1^{p/2})^s\big\|_\infty
+\big\|(A_2^{p/2}B_2^qA_2^{p/2})^s\big\|_\infty\over2}
\end{align*}
for every $A_i,B_i\in\bP_n$, $i=1,2$. Since $s>0$, the above inequality implies that
if $A_i^{p/2}B_i^qA_i^{p/2}\le I$ for $i=1,2$ then
$$
\biggl({A_1+A_2\over2}\biggr)^{p/2}\biggl({B_1+B_2\over2}\biggr)^q
\biggl({A_1+A_2\over2}\biggr)^{p/2}\le I,
$$
that is, if $B_i^q\le A_i^{-p}$ for $i=1,2$ then
$((B_1+B_2)/2)^q\le((A_1+A_2)/2)^{-p}$. In particular, letting $A_i=B_i^{-q/p}$ gives
$$
\biggl({B_1+B_2\over2}\biggr)^q\le\biggl({B_1^{-q/p}+B_2^{-q/p}\over2}\biggr)^{-p},
$$
which is clearly equivalent to the desired inequality.
\end{proof}

The ``if\," part of the next lemma is rather easy as given in \cite{MPP} (also
\cite[Chapter 4]{FMPS}) in a more general form. However, it would be beyond the scope of
this paper if we supply counterexamples to prove the ``only if\," part. So we leave the
details of the proof to a separate paper \cite{AuHi}. The lemma includes the
cases $p=0$ or $q=0$ for completeness while $p,q\ne0$ in Theorem \ref{T-5.2}.

\begin{lemma}\label{L-5.4}
For $p,q\in\bR$ consider the matrix inequality
\begin{equation}\label{F-5.9}
\biggl({A^p+B^p\over2}\biggr)^{1/p}\le\biggl({A^q+B^q\over2}\biggr)^{1/q},
\end{equation}
where $((A^p+B^p)/2)^{1/p}$ for $p=0$ means
$$
\lim_{p\to0}\biggl({A^p+B^p\over2}\biggr)^{1/p}
=\exp\biggl({\log A+\log B\over2}\biggr).
$$
Then inequality \eqref{F-5.9} holds for every $n\in\bN$ and every
$A,B\in\bP_n$ (or equivalently, for every $A,B\in\bP_2$ with fixed $n=2$) if
and only if one of the following is satisfied:
$$
\begin{cases}
p=q, \\
1\le p<q, \\
p<q\le-1, \\
p\le-1,\ q\ge1, \\
1/2\le p<1\le q, \\
p\le-1<q\le-1/2.
\end{cases}
$$
\end{lemma}

\noindent
{\it Proof of Theorem \ref{T-5.2} (continued).}\enspace
Let $s>0$ and assume joint convexity of \eqref{F-5.8} when $n=2$. As in
the proof of Proposition \ref{P-4.1}\,(1) (just replace $\Tr$ with $\|\cdot\|_\infty$ and
concavity with convexity) we see that $(a^p+b^p)^s$ is convex in $a,b>0$. By \eqref{F-4.1}
we have either $ps\ge1$ and $p\ge1$, or $ps<0$ and $p\le1$. Similarly, we have either
$qs\ge1$ and $q\ge1$, or $qs<0$ and $q\le1$. Therefore, $p,q\in(-\infty,0)\cup[1,\infty)$.
Furthermore, since the Hessian of $x^{-q}y^q$ is $-q^2x^{-2q-2}y^{2q-2}<0$,
$x^{-q}y^q$ cannot be jointly convex in $x,y>0$ so that the case $p=-q$ is excluded.
Thus, by Lemmas \ref{L-5.3} and \ref{L-5.4}, one of the following must be satisfied:
\begin{itemize}
\item[(a)] $1\le1/q<-1/p$,
\item[(b)] $1/q<-1/p\le-1$,
\item[(c)] $1/q\le-1$, $-1/p\ge1$,
\item[(d)] $1/2\le1/q<1\le-1/p$,
\item[(e)] $1/q\le-1<-1/p\le-1/2$.
\end{itemize}
When $p,q<0$ and so $1/q<0<-1/p$, (c) must hold so that  $-1\le p,q<0$, which is the
first case of \eqref{F-5.2}. When $p<0$ and $q\ge1$ and so $0<1/q\le1$ and $-1/p>0$, (d)
or (a) with $1/q=1$ must hold so that $1<q\le2$ and $-1\le p<0$, or $q=1$
and $-1<p<0$. Moreover, since $x^{(p+q)s}$ is convex in $x>0$, $(p+q)s\ge1$. So we have
the second case of \eqref{F-5.2}. When $p\ge1$ and $q<0$, we similarly have the third
case of \eqref{F-5.2}. Finally, the case where $p\ge1$ and $q\ge1$ cannot be compatible
with any of (a)--(e), so this case does not appear.
\end{proof}

\section*{Concluding remarks}
After completing this paper we have obtained essential improvements on Theorem \ref{T-1.1}
as follows: Let $f$ be a real function on $(0,\infty)$, $0\le p,q\le1$ with
$(p,q)\ne(0,0)$, and $\Phi,\Psi$ be as in Theorem \ref{T-1.1}. If either
\begin{itemize}
\item[(a)] $f(x^{p+q})$ is operator monotone on $(0,\infty)$, or
\item[(b)] $f$ is non-decreasing and $f(x^2)$ is concave on $(0,\infty)$,
\end{itemize}
then the functions
\begin{align*}
(A,B)\in\bP_n\times\bP_m&\longmapsto
\Tr f\bigl(\Phi(A^p)^{1/2}\Psi(B^q)\Phi(A^p)^{1/2}\bigr) \\
&\qquad\mbox{and}\quad
\Tr f\Bigl(\bigl\{\Phi(A^{-p})^{1/2}\Psi(B^{-q})\Phi(A^{-p})^{1/2}\bigr\}^{-1}\Bigr)
\end{align*}
are jointly concave.

Applying the case (a) to $f(x)=x^s$ with $0\le s\le1/(p+q)$ (also the case (b) to
$f(x)=x^s$ with $0\le s\le1/2$) we conclude that the function \eqref{F-1.1} is jointly
concave if either $0\le p,q\le1$ and $0\le s\le1/(p+q)$, or $-1\le p,q\le0$ and
$1/(p+q)\le s\le0$. In view of Proposition \ref{P-4.1}\,(2), this completely settles the
joint concavity question for \eqref{F-1.1}. Applying (b) to $f(x)=-x^s$ with $s\le0$ we
also see that \eqref{F-1.1} is jointly convex if either $0\le p,q\le1$ and $s\le0$, or
$-1\le p,q\le0$ and $s\ge0$. This considerably improves Theorem \ref{T-1.1}\,(2) though
there is still a gap from a necessary condition in Proposition \ref{P-4.4}\,(2).

The proof for the case (a) is an adaptation of Epstein's method in the proof of Theorem
\ref{T-1.1} based on the integral expression of an operator monotone function $f$.
For the case (b) we first prove by means of matrix differential calculus that
$\Tr f(A^{1/2}BAB^{1/2})$ is jointly concave if and only if $f$ satisfies condition (b).
Then the result follows from an argument as in Remark \ref{R-4.3}. The details of these
and related matters will be presented elsewhere.

\section*{Acknowledgments}

The author would like to thank Jean-Christophe Bourin who suggested him how to extend
concavity results for trace functions to symmetric anti-norm functions, that is the main
idea of this paper. He acknowledges support by Grant-in-Aid for Scientific Research
(C)21540208.

\end{document}